\newcommand{\N}{\mathbb{N}}
\newcommand{\RR}{\mathbb{R}}                                  
\newcommand{\R}{\mathbb{R}}                                  
\newcommand{\intt}{\mathrm {int}\,}
\newcommand{\cl}{\mathrm {cl}\,}
\newcommand{\conv}{\mathrm{conv}\,}
\newcommand{\midset}{\;|\;}
\newcommand{\exppi}{\pi_{\mathrm{exp}}}
\DeclareMathOperator{\ecc}{ecc}
\DeclareMathOperator{\dist}{dist}
\newcounter{enumctr}
\newenvironment{enum}{\begin{list}{(\roman{enumctr})}{\usecounter{enumctr}}}{\end{list}}
\newtheorem{theorem}{Theorem}[section]
\newtheorem{definition}[theorem]{Definition}
\newtheorem{lemma}[theorem]{Lemma}
\newtheorem{corollary}[theorem]{Corollary}
\newtheorem{proposition}[theorem]{Proposition}
\newtheorem{remark}[theorem]{Remark}
\begin{document}

\title{Extremal norms for positive linear inclusions.}

\author{%
 Oliver Mason\footnotemark[2]
      \thanks{Hamilton Institute,
      National University of Ireland, Maynooth, Ireland, {\tt
       oliver.mason@nuim.ie}}, 
     \and Fabian Wirth\thanks{Institut f\"{u}r
      Mathematik, Universit\"{a}t W\"{u}rzburg, W\"{u}rzburg, Germany,
      {\tt wirth@mathematik.uni-wuerzburg.de}
F. Wirth is supported by the DFG under grant Wi1458/10.}}

\maketitle

\begin{abstract}
  For finite-dimensional linear semigroups which leave a proper cone invariant
  it is shown that irreducibility with respect to the cone implies the
  existence of an extremal norm. In case the cone is simplicial a similar
  statement applies to absolute norms. The semigroups under consideration may
  be generated by discrete-time systems, continuous-time systems or
  continuous-time systems with jumps.  The existence of extremal norms is used
  to extend results on the Lipschitz continuity of the joint spectral radius
  beyond the known case of semigroups that are irreducible in the
  representation theory interpretation of the word.
\end{abstract}

{\bf Keywords}:
Joint spectral radius, extremal norm, linear switched systems, linear
semigroups

\smallskip

{\bf AMS Classification}: 15B48, 15A60, 34D08, 52A21

\section{Introduction}

In this paper we investigate the exponential growth rate of linear
semigroups which leave a proper cone $K$ invariant. In the case of matrix
products this exponential growth rate is commonly known as the joint
spectral radius of a set of matrices and we will use this name for matrix
semigroups in general.  For matrix semigroups generated by discrete-time
systems, continuous time systems or switched systems with jumps it is
shown that an irreducibility condition of the semigroup with respect to
the cone $K$ guarantees the existence of an extremal norm. In addition
this norm can be chosen to be monotone with respect to the cone $K$. This
has been obtained previously for positive semigroups with respect to the
cone $\R^n_+$ in \cite{ABMW12} and for general cones in the discrete time
case in \cite{guglielmi2013exact}. Here we follow the basic idea of
\cite{ABMW12}, but have to adjust several arguments to be able to deal
with general proper cones. The approach is distinct from the ideas
presented in \cite{guglielmi2013exact}.

 The
results are used to extend regularity results for the joint spectral
radius, in that the joint spectral radius is irreducible in a neighbourhood
of $K$-irreducible semigroups.

The joint spectral radius of sets of matrices for discrete or continuous
linear inclusions and associated extremal norms have been studied in
\cite{Gurv95,Kozy90,Wirt02,wirth2005structure,guglielmi2008algorithm,maesumi2008optimal,morris2010criteria,dai2011extremal},
see also the survey \cite{Sho07}.  The joint spectral radius plays a key role
in characterising growth rates of solutions of inclusions and hence in their
stability analysis \cite{Jungers}.  Motivated by the practical importance of
systems whose state variables are constrained to remain non-negative (given
non-negative initial conditions), there has been significant interest recently
in studying positive inclusions and positive switched systems
\cite{FainMarChig, GurShoMas,jungers2012asymptotic,Pepe12}. Also in several recent
proposals for efficient computation of the joint spectral radius, positivity
has played an important role, see
\cite{blondel2005accuracy,guglielmi2013exact} and the lifting techniques
discussed for instance in \cite{jungers2012lifted}.  In
\cite{protasov2010joint} positivity with respect to arbitrary positive cones
has been considered. Also it has been noted e.g. in \cite{morris2010criteria}
that positivity properties yield criteria for the uniqueness (up to scaling)
of Barabanov norms, a concept we describe below.  Note that in general it is
hard to decide, whether a given set of matrices share an invariant cone,
\cite{Prot10}. In some cases, however, this is clear in the way that the
matrices are defined. In particular, for the positive orthant $\R^n_+$ or the
cone or positive semidefinite matrices.

The defining property of positive inclusions, the positivity constraint on
their dynamics, has motivated a number of particular questions in their
stability analysis.  One such example is in the study of \emph{copositive
  Lyapunov functions} and linear copositive Lyapunov functions in
particular.  Necessary and sufficient conditions for the existence of a
common linear copositive Lyapunov function in the case of a switched
positive system with 2 modes were given in \cite{MasSho07}; these were then
extended to the general case in \cite{knorn2009linear}.  Copositivity has
subsequently been studied for more general cones in \cite{Bundfuss}, while
its application to the stability and stabilisation of switched positive
systems has been thoroughly investigated in
\cite{fornasini2010stabilizability}, \cite{fornasini2010linear} and
\cite{fornasini2012stability}.

In \cite{mason2009stability}, the question of D-stability for positive inclusions 
was considered.  Separate sufficient and necessary conditions for a positive
linear inclusion to be D-stable were described.  These conditions are 
intimately connected to the existence of common linear copositive Lyapunov 
functions for a related inclusion. This theme was developed in \cite{BokMasWir2010} 
where a single necessary and sufficient condition was described for a system 
whose constituent systems are described by irreducible matrices.  The matrix 
theoretic notion of irreducibility shall play a key role again in the current paper.  

It was shown in \cite{GurShoMas} that the stability of a 2-dimensional
positive inclusion is equivalent to the stability of the convex hull of
its associated matrices.  Unfortunately this result fails to be true in
general and a specific counterexample for 3-dimensional systems has been
described in \cite{FainMarChig}.  In studying general linear inclusions,
the concepts of extremal and Barabanov norms play an important role as
such norms can be used to explicitly characterise the growth rate of an
inclusion.  Barabanov norms for positive linear inclusions have been
recently considered in \cite{Teichner201295}; an explicit closed-form
expression for a Barabanov norm for 2-dimensional discrete positive
inclusions is derived. 

In this paper, we consider extremal norms for positive inclusions in both
the discrete and continuous case.  We show that for positive inclusions, a
matrix-theoretic notion of irreducibility is sufficient for the existence
of an extremal norm.  This is novel as the concept of irreducibility we
consider is weaker than that used to establish the existence of extremal
norms in the work of Barabanov{, \cite{Bara88}} and others.
Our result also relates the existence of an extremal norm to a property of
the convex hull of the matrices associated with the inclusion.

We use the above results to extend regularity results for the joint
spectral radius \cite{Victor201012, Wirt02}. In the case of positive
systems irreducibility in the sense of nonnegative matrices is sufficient
for local Lipschitz properties of the joint spectral radius.  We emphasise
that this notion of irreducibility is distinct from that utilised for
general inclusions.  Our results show that an interesting observation of
\cite{kozyakin2007structure} for the case of nonnegative $2\times 2$
matrices is a consequence of a general property of sets of nonnegative
matrices and monotone norms.

The paper is organised as follows: In the ensuing
Section~\ref{sec:prelims} we recall several known results about proper
cones and the properties of matrices that leave such cones invariant. In
Section~\ref{sec:linincl} we describe three principles by which linear
semigroups can be generated and recall a common definition of the joint
spectral radius. In Section~\ref{sec:exnorms} we recall the definition of
extremal norms. As the first main result it is shown that a
$K$-irreducible positive semigroup admits a monotone extremal norm. The
results of Section~\ref{sec:exnorms} are used in
Section~\ref{sec:regularity} to obtain Lipschitz continuity results for
the joint spectral radius. These results extend previous results in
\cite{Wirt02} which used a notion of irreducibility from representation
theory to obtain Lipschitz continuity properties.
\subsection{Preliminaries}
\label{sec:prelims}

  Throughout the paper, $\RR$ and $\RR^n$ denote the field of real numbers
  and the vector space of all $n$-tuples of real numbers, respectively.
  For $x \in \RR^n$ and $i = 1, \ldots , n$, $x_i$ denotes the $i$th
  coordinate of $x$. Similarly, $\RR^{n \times n}$ denotes the space of $n
  \times n$ matrices with real entries and for $A \in \RR^{n \times n}$,
  $A_{ij}$ denotes the $(i, j)$th entry of $A$.  The convex hull of a set
  $C\subset \R^n$ is denoted by $\conv C$.

  We now recall standard concepts regarding proper cones, as discussed in
  \cite{vandergraft1968spectral,schneider1970cross,tam2001cone}. As usual,
  a cone $K\subset \R^n$ is a nonempty set satisfying $rK \subset K$ for
  all real $r>0$. We will always consider a proper cone $K$, that is a
  cone, which is (i) convex, so that $x+y\in K$ for all $x,y\in K$, (ii)
  pointed, i.e. $K\cap -K = \{ 0 \} $, (iii) closed and (iv) full,
  i.e. the interior of $K$ is nonempty. The interior of a set $C\subset
  \R^n$ is denoted by $\intt C$, the closure by $\cl C$ and its boundary
  by $\partial C$. The ball of radius $\varepsilon>0$ around $0\in \R^n$
  is denoted by $B(0,\varepsilon)$. 

  Given a proper cone $K$ we consider the partial order induced by $K$ on
  $\RR^n$. For vectors $x, y \in \RR^n$, we write: $x \geq_K y$ if
  $x-y\in K$; $x >_K y$ if $x \geq_K y$ and $x \neq y$; $x \gg_K y$ if
  $x-y\in \intt K$. A base $B$ of a cone $K$ is a set with the
  properties that 
  \begin{equation*}
      0 \notin \cl B \quad \text{ and } \quad K = \R_+ B = \{ rx \midset
      x\in B, r\geq 0 \} \,. 
  \end{equation*}
  We will always consider compact bases, that are given as the
  intersection of a hyperspace $X$ in $\R^n$ with $K$. It is known that a
  cone has such a base, if and only if it is pointed.

  It will be useful to study norms, which are adapted to the nonnegative
  setting.  A norm $\|\cdot\|$ on $\RR^n$ is called {\em monotone} with
  respect to the ordering induced by $K$ if 
  \begin{equation}
      \label{eq:monnorm}
x\geq_K y\geq_K 0 \quad \Rightarrow \quad 
  \|x\|\geq \|y\|\,.
  \end{equation}
 For any proper cone and the corresponding order, 
  a monotone norm on $\R^n$ exists, see \cite[p.~38]{KrasLifs89}.

  A matrix $A\in \R^{n \times n}$ is called nonnegative, or cone
  preserving, with respect to a proper cone $K$ if $AK\subset K$. The set
  of cone-preserving linear maps is denoted by $\pi(K)$.  We say that $A$
  is $K$-positive if $A\left(K \setminus \{ 0 \} \right) \subset \intt K$.

  Following \cite{stern1991exponential} we say that $A$ is exponentially
  $K$-nonnegative, if $e^{At} \in \pi(K)$, i.e., $e^{At}K \subset K$, for
  all $t\geq0$. This property is also known as {\em cross-positivity} of
  $A$ with respect to the cone $K$, see
  \cite{schneider1970cross,gritzmann1995cross}. We denote the set of
  exponentially $K$-nonnegative matrices by $\exppi(K)$. Also $A$ is
  exponentially $K$-positive, if $e^{At}$ is $K$-positive, for all
  $t>0$.

  For the concept of irreducibility of a cone-preserving map $A\in
  \pi(K)$, recall that a face $F$ of a proper cone $K$ is a cone contained
  in $K$, which also has the property, that 
  \begin{equation}
      \label{eq:facedef}
      x\in F \text{ and } x\geq_K y \geq_K
  0 \quad \Longrightarrow \quad  y\in F \,.
  \end{equation}
   The faces $\{ 0 \}$ and $K$ are the trivial cases of
  faces of $K$.  Now $A\in \pi(K)$ is called $K$-reducible, if $AF\subset
  F$ for some nontrivial face $F$ of $K$ and $K$-irreducible, if it is not
  $K$-reducible. We will need the following characterisation of
  $K$-irreducibility, \cite[Theorem~4.1 and
  Lemma~4.2]{vandergraft1968spectral}: an  $A\in \pi(K)$ is
  $K$-irreducible, if and only if one of the following equivalent
  conditions is satisfied:
  \begin{enum}
    \item[(IR1)] $A$ does not have an eigenvector on $\partial K$,
    \item[(IR2)] $(I+A)^{n-1} (K\setminus \{ 0 \}) \subset \intt K$.
    \item[(IR3)] For every compact base $B$ of $K$ we have
    $(I+A)^{n-1} B \subset \intt K$.
  \item[(IR4)] For every compact base $B$ of $K$ and all choices of
    positive coefficients $\alpha_i>0, i=1,\ldots,n-1$ we have for
    corresponding linear combination of the powers of $A$ that
      \begin{equation*}
          \left( \sum_{i=1}^{n-1} \alpha_i A^i\right) B \subset \intt K\,. 
      \end{equation*}
  \end{enum}

  The equivalence of $K$-irreducibility and (IR1) is proved in
  \cite{vandergraft1968spectral}, as well as the fact that
  $K$-irreducibility implies (IR2). The remaining implications are trivial.

  \begin{remark}
\label{rem:irred}
      Note that from condition (IR2) or (IR3) we see that
      $K$-irreducibility is an open property in $\pi(K)$ in the topology
      induced by the norm topology on $\R^{n \times n}$. Also the set of
      $K$-positive matrices is open in $\R^{n \times n}$.
  \end{remark}

  An exponentially $K$-nonnegative matrix has the property that $e^{A t}$
  is $K$-irreducible for all $t$ with the possible exception of a discrete
  set if and only if $A$ has no eigenvector in $\partial K$, see
  \cite[Lemma~8]{schneider1970cross}. We require a 
  slightly different view
  of this statement as follows. Again let $K$ be a proper cone. For any
  face $F$ of $K$ we denote by $H_F$ the span of $F$, that is the smallest
  linear space containing $F$. Recall, that for any vector $x\in \partial
  K$, $x\neq 0$, there exists a face $F$ of $K$, such that $x \in
  \intt_{H_F} F$, where $\intt_{H_F}F$ denotes the interior of $F$ relative to
  the subspace $H_F$, \cite[Lemma~2.1]{vandergraft1968spectral}.

  \begin{lemma}
\label{lem:expnonneg}
      Let $A\in \R^{n \times n}$ be exponentially $K$-nonnegative. The
      following are equivalent:
      \begin{enum}
        \item $A$ has no eigenvector in $\partial K$,
        \item $e^{A t}$ is $K$-irreducible for all $t\geq 0$ with the possible
          exception of a discrete set,
        \item there exists a $t>0$ such that $e^{A t}$ is $K$-irreducible.
        \item there does not exist a nontrivial face $F$ of $K$ such that
          $H_F$ is $A$-invariant.
      \end{enum}
  \end{lemma}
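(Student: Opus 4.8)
The plan is to prove the cycle of implications (i) $\Rightarrow$ (iv) $\Rightarrow$ (ii) $\Rightarrow$ (iii) $\Rightarrow$ (i), using the structural facts on faces recalled just before the statement together with the cross-positivity machinery from \cite{schneider1970cross,stern1991exponential}.

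For (i) $\Rightarrow$ (iv), I would argue by contraposition. Suppose $F$ is a nontrivial face with $H_F$ $A$-invariant. Since $A$ is exponentially $K$-nonnegative, $e^{At}K\subset K$, and because $e^{At}$ maps $H_F$ into itself while preserving $K$, one checks that $e^{At}$ maps $F = K\cap H_F$ into $K\cap H_F = F$; hence $F$ is $e^{At}$-invariant for all $t\ge 0$. Now restrict the one-parameter semigroup $e^{At}|_{H_F}$ to the proper cone $F$ inside $H_F$: this is again a cross-positive situation, and the restricted generator $A|_{H_F}$ has, by Perron--Frobenius theory for cross-positive maps (the spectral bound is an eigenvalue with an eigenvector in $F\subset\partial K$), an eigenvector $v\in F$. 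Since $F\subset\partial K$ (a nontrivial face lies in the boundary), $v$ is an eigenvector of $A$ in $\partial K$, contradicting (i).

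For (iv) $\Rightarrow$ (ii): if $e^{At_0}$ were $K$-reducible for $t_0$ in a set that is not discrete, then there is a nontrivial face $F$ with $e^{At_0}F\subset F$ for all $t_0$ in a set with an accumulation point. Taking $x\in\intt_{H_F}F$ (nonempty by \cite[Lemma~2.1]{vandergraft1968spectral}), the analytic map $t\mapsto e^{At}x$ lies in the finite-dimensional space $H_F$ for all such $t_0$; since $H_F$ is a closed subspace and $e^{At}x$ is real-analytic in $t$, the identity $e^{At}x\in H_F$ propagates to all $t\ge 0$, and differentiating at $t=0$ gives $Ax\in H_F$. As $x$ generates $H_F$ over the action (more precisely, $H_F$ is spanned by such interior points), one deduces $H_F$ is $A$-invariant, contradicting (iv). Hence the set of $t$ with $e^{At}$ $K$-reducible is discrete, which is (ii). The implication (ii) $\Rightarrow$ (iii) is immediate since a discrete subset of $[0,\infty)$ cannot be everything.

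Finally (iii) $\Rightarrow$ (i): suppose $A$ has an eigenvector $v\in\partial K$, $Av=\lambda v$ (note $\lambda\in\R$ since $v\in K$ forces the Perron eigenvalue structure, or simply because $v$ is a real eigenvector). Then $e^{At}v = e^{\lambda t}v$ for all $t$, so the ray through $v$ is $e^{At}$-invariant. Pick the face $F$ with $v\in\intt_{H_F}F$; since $v\in\partial K$, $F$ is nontrivial. Because $e^{At}$ is cone-preserving and fixes the ray through the relative-interior point $v$ of $F$, a standard face argument shows $e^{At}F\subset F$, so $e^{At}$ is $K$-reducible for every $t>0$, contradicting (iii). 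I expect the main obstacle to be the careful handling of the real-analyticity/propagation argument in (iv) $\Rightarrow$ (ii) and, relatedly, making precise the claim that $H_F$ being invariant under all $e^{At}$ forces $F$ itself to be invariant --- this rests on the fact that $K\cap H_F = F$ for a face $F$, which must be invoked cleanly. The Perron--Frobenius input for cross-positive generators in (i) $\Rightarrow$ (iv) is the other delicate point, but it is available from the cited literature.
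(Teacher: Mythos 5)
Your handling of (i) $\Leftrightarrow$ (iv) and of (iii) $\Rightarrow$ (i) is essentially the paper's own argument: if $H_F$ is $A$-invariant then $A|_{H_F}$ is cross-positive on the proper cone $F=K\cap H_F$ of $H_F$ and hence has an eigenvector in $F\subset\partial K$ by \cite[Theorem~6]{schneider1970cross}; conversely an eigenvector $v\in\partial K$ with $Av=\lambda v$ forces $e^{At}F\subset F$ for the face $F$ with $v\in\intt_{H_F}F$ by exactly the order argument you sketch ($\alpha y\leq_K v$ for $y\in F$, then the face property). Those parts are sound.

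The genuine gap is in (iv) $\Rightarrow$ (ii), which is also the only link that closes your cycle. You assert that if the set $T$ of times at which $e^{At}$ is $K$-reducible is not discrete, then there is a \emph{single} nontrivial face $F$ with $e^{At}F\subset F$ for all $t$ in a set with an accumulation point. But reducibility of $e^{At}$ only supplies, for each $t\in T$, \emph{some} invariant nontrivial face $F_t$, and a general proper cone has uncountably many faces (for the Lorentz cone the nontrivial faces are exactly its boundary rays), so no pigeonhole argument yields a common $F$; nor is the family of faces closed under limits in any form you could exploit. Without a common face, the real-analytic propagation of $e^{At}x\in H_F$ has nothing to propagate, the implication (iv) $\Rightarrow$ (ii) is unproved, and with it the whole cycle: (ii) is then not derived from anything and (iv) is not connected back to (i)--(iii). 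The paper sidesteps this by taking (i) $\Leftrightarrow$ (ii) directly from \cite[Lemma~8]{schneider1970cross}, whose proof runs through an eigenvalue-resonance argument (an eigenvector of $e^{At}$ in $\partial K$ that is not an eigenvector of $A$ can occur only when $e^{t\lambda}=e^{t\mu}$ for distinct eigenvalues $\lambda,\mu$ of $A$, i.e.\ only for $t$ in a discrete set, so by (IR1) reducibility at a non-discrete set of times produces an eigenvector of $A$ itself in $\partial K$), and by proving (iv) $\Rightarrow$ (i) separately via the same face argument you already use for (iii) $\Rightarrow$ (i). Either cite that lemma for (i) $\Leftrightarrow$ (ii), or replace your face-propagation step by the resonance argument applied to the eigenvectors that (IR1) provides at each reducible time; as written the step does not go through.
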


  \begin{proof}
      The equivalence of (i) and (ii) is proved in
      \cite[Lemma~8]{schneider1970cross}. The implication (ii)
      $\Rightarrow$ (iii) is obvious. If (iii) holds, then $e^{At}$ does
      not have an eigenvector in $\partial K$ by (IR1). As the
      eigenvectors of $A$ are also eigenvectors of $e^{At}$ we obtain (i).
      If (iv) does not hold, then there exists a nontrivial face $F$ of
      $K$ such that $A_{|H_F}$ is exponentially $F$-nonnegative. By
      \cite[Theorem~6]{schneider1970cross} this implies that $A_{|H_F}$ has
      an eigenvector in $F$. This is also
      an eigenvector of $A$ so that (i) is false. 

      Finally, let $Ax=\lambda x$ for $x\in\partial K\setminus \{ 0 \}$.
      Now let $F$ with span $H_F$ such that $x\in \intt_{H_F} F$. For any
      $t>0$ we have that $e^{At} x = e^{\lambda t} x$ and for $0\neq y \in
      F$ we may choose an $\alpha >0$ such that $\alpha y \ll_F x$, as $x
      \in \intt_{H_F} F$. This implies $\alpha y \leq_K x$ and so $0
      \leq_K e^{At} y \leq_K e^{At} x = \alpha e^{\lambda t} x$. By the
      defining property of a face this implies that $e^{At} y \in F$. As
      $t>0$ was arbitrary we see that $e^{At}F \subset F$ for all $t>0$
      from which it follows that $e^{At}H_F \subset H_F$ for all $t>0$ and
      so $A H_F \subset H_F$.  This concludes the proof.
  \end{proof}

  We note that in the proof of $\neg$ (i) $\Rightarrow \neg$ (iv), we have
  followed ideas already used in \cite{vandergraft1968spectral}. With a
  slight but common abuse of terminology, we call an exponentially
  $K$-nonnegative matrix $A$ irreducible, if $A$ satisfies one of the
  equivalent conditions of Lemma~\ref{lem:expnonneg}.

  \begin{remark}
      Concerning Lemma~\ref{lem:expnonneg}\,(ii), note that the
      corresponding statement in \cite{schneider1970cross} always speaks
      of an at most countable exceptional set of times $t$ at which
      $e^{At}$ is not irreducible. As the proof in
      \cite{schneider1970cross} uses analyticity arguments the formulation
      we use here is actually the statement proved in the original paper.
\hfill $\Box$
  \end{remark}

  We will also need the following two observations about cones and spectra
  of matrices.

  \begin{lemma}
      \label{lem:base}
      Let $K\subset \R^n$ be a proper cone and let $B$ be a compact base
      of $K$.  For every compact $C\subset \intt K$ there is a $\delta>0$
      such that
\begin{equation}
    \label{eq:lembase}
    \delta x \ll_K y\,, \quad  \forall\, x \in B, y\in C\,.
\end{equation}
  \end{lemma}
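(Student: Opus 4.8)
The plan is to exploit compactness of both $B$ and $C$ together with the fact that $C$ lies in the \emph{open} cone $\intt K$. For each fixed $y \in C$ we have $y \in \intt K$, so there is a whole ball $B(y,\rho_y) \subset K$. Since $B$ is bounded, say $\|x\| \le M$ for all $x \in B$, choosing $\delta_y$ small enough that $\delta_y M < \rho_y$ forces $y - \delta_y x \in B(y,\rho_y) \subset K$ for every $x \in B$, i.e. $\delta_y x \le_K y$. To upgrade $\le_K$ to $\ll_K$ and to get a \emph{uniform} $\delta$, I would instead work slightly inside: pick $y' \in \intt K$ close to $y$ with $y - y' \in \intt K$ (possible since $\intt K$ is open and $y \in \intt K$), and a ball $B(y',\rho) \subset K$; then $\delta x \ll_K y$ will follow once $\delta x \le_K y - y'$, which holds when $\delta \|x\| < \rho$.

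**Next I would make this uniform in $y$** by a standard covering argument. For each $y \in C$ produce, as above, an open neighbourhood $U_y$ of $y$ and a radius $\delta_y > 0$ such that $\delta_y x \ll_K z$ for all $x \in B$ and all $z \in U_y$ — concretely, choose $U_y$ so that $U_y \subset \intt K$ and $z - w \in \intt K$ for some fixed interior point $w$ with a ball $B(w,\rho)\subset K$, and set $\delta_y = \rho/(2M)$. The family $\{U_y : y \in C\}$ is an open cover of the compact set $C$, so finitely many $U_{y_1},\dots,U_{y_m}$ suffice; taking $\delta := \min_j \delta_{y_j} > 0$ gives \eqref{eq:lembase} for all $x \in B$ and all $y \in C$.

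**A cleaner route**, which I would actually write up, avoids the perturbation trick: define the function $f(x,y) := \sup\{\, t \ge 0 : y - t x \in K \,\}$ on $B \times C$. Since $y \in \intt K$ and $-x$ is bounded, $f(x,y) > 0$ everywhere; since $K$ is closed, the supremum is attained and $y - f(x,y)x \in \partial K$, and one checks $f$ is continuous (indeed upper semicontinuous from closedness of $K$ and bounded below away from $0$ by the ball argument). By compactness of $B \times C$, $f$ attains a positive minimum $2\delta > 0$. Then for any $x \in B$, $y \in C$ we have $y - 2\delta x \in K$, hence $y - \delta x = (y - 2\delta x) + \delta x$; since $y - 2\delta x \in K$ and we can arrange $\delta x$ to push us into the interior — more carefully, $y - \delta x \in \intt K$ because $y - \delta x$ is a point on the open segment between the interior point $y$ and the boundary point $y - 2\delta x$, and such a point lies in $\intt K$ (a line segment from an interior point to any point of a convex body stays in the interior except possibly at the far endpoint). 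Thus $\delta x \ll_K y$ for all $x \in B$, $y \in C$.

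**The main obstacle** is the continuity (or at least the attainment of a positive infimum) of $f$: one must rule out the degenerate possibility that $f(x,y) \to 0$ along some sequence, which is exactly where $C \subset \intt K$ rather than $C \subset K$ is essential — a point $y \in \partial K$ could fail \eqref{eq:lembase} for $x$ pointing ``outward.'' The ball-in-the-interior estimate handles this uniformly, so the argument is routine once that observation is in place; the only mild care needed is the elementary convexity fact that the segment from an interior point to a boundary point meets $\partial K$ only at its endpoint, which gives the strict inequality $\ll_K$.
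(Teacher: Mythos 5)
Your covering argument (second paragraph) is correct and is essentially the paper's own proof: the paper localises by fitting a ball $B(0,\varepsilon)\subset\intt (y-K)$ and then takes a finite subcover of $C$, which is the same mechanism as your auxiliary interior point $w$ with $B(w,\rho)\subset K$, and your ``cleaner route'' via the function $f$ is only a repackaging of the same two ingredients (the ball-in-the-interior estimate and compactness of $B\times C$). The one slip worth fixing is in your first paragraph, where the ball must be taken around $y-y'$ rather than around $y'$ for the implication ``$\delta\|x\|<\rho \Rightarrow \delta x\leq_K y-y'$'' to be valid --- your paragraph-two version with $w$ already does this correctly.
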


\begin{proof}
    Let $y\in C$. By assumption $y\gg_K 0$, so $0 \in \intt (y - K)$. Let
    $\varepsilon>0$ be small enough so that $B(0,\varepsilon) \subset
    \intt (y - K)$. As $B$ is compact there exists a $\delta_y>0$ such
    that $\delta_y B \subset B(0,\varepsilon/2)$. Then if $\|y-z\|<
    \varepsilon/2$ we have
    \begin{equation*}
        (y-z) + \delta_y B \subset B(0,\varepsilon) \subset \intt (y - K)
    \end{equation*}
    and so $\delta_y B \subset \intt (z - K)$. Thus for every $y\in C$
    there exists a neighbourhood $U_y$ and a $\delta_y>0$ such that
    $\delta_y x \ll_K z$ for all $x \in B, z \in U_y$. Choose a finite
    subcover $U_{y_1}, \ldots, U_{y_\ell}$ of the cover $\{ U_y \midset
    y\in C \} $. Then $\delta := \min \{ \delta_{y_1}, \ldots, \delta_{y_\ell} \}
    >0$ satisfies \eqref{eq:lembase}.
\end{proof}

  \begin{lemma}
      \label{lem:specrad}
Let $K\subset \R^n$ be a proper cone and $A\in \pi(K)$. If there exist $x\in K, c>0$ such that
\begin{equation*}
    A x \gg_K c x\,,
\end{equation*}
then the spectral radius $r(A) > c$.
  \end{lemma}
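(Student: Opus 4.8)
The plan is to deduce the strict bound $r(A)>c$ from Gelfand's formula $r(A)=\lim_{k\to\infty}\|A^k\|^{1/k}$ by iterating the hypothesis along the single vector $x$ and comparing with a monotone norm. The first observation is that $x\neq 0$: if $x=0$ then $Ax-cx=0\notin\intt K$ (since $0\notin\intt K$ for a proper cone), contradicting $Ax\gg_K cx$. Because $Ax-cx\in\intt K$ and $\intt K$ is open, the point $Ax-cx-tx$ still lies in $K$ for all sufficiently small $t>0$; fixing such a $t$ and writing $c':=c+t>c$ we obtain the slightly stronger relation $Ax\geq_K c'x$. This replacement of $c$ by a strictly larger $c'$ is precisely what will turn the conclusion into a strict inequality, so it is the one step that genuinely uses the open (rather than merely closed) part of the hypothesis.

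Next, since $A\in\pi(K)$, the map $A$ is monotone for $\leq_K$: from $u\geq_K v\geq_K 0$ one gets $A(u-v)\in K$ and $Av\in K$, i.e. $Au\geq_K Av\geq_K 0$. Starting from $Ax\geq_K c'x\geq_K 0$ (recall $x\in K$) and applying $A$ repeatedly, an easy induction gives
\begin{equation*}
 A^k x\;\geq_K\;(c')^k x\;\geq_K\;0\qquad\text{for all }k\geq 1,
\end{equation*}
where at each step one uses $A^{k+1}x\geq_K (c')^k Ax\geq_K (c')^{k+1}x$ together with the fact that $A^k x$ and $(c')^k x$ both lie in $K$.

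Finally, fix a norm $\|\cdot\|$ on $\R^n$ that is monotone with respect to $K$; such a norm exists for every proper cone, as recalled in the preliminaries. Monotonicity applied to $A^k x\geq_K (c')^k x\geq_K 0$ yields $\|A^k x\|\geq (c')^k\|x\|$, and since $x\neq 0$ we have $\|x\|>0$, so the operator norm induced by $\|\cdot\|$ satisfies $\|A^k\|\geq (c')^k$ for every $k$. Hence $r(A)=\lim_{k\to\infty}\|A^k\|^{1/k}\geq c'>c$, as claimed. There is no serious obstacle here; the only points requiring a little care are the passage from $\gg_K$ to $Ax\geq_K c'x$ with $c'>c$, and the bookkeeping ensuring that every vector being compared stays inside $K$, so that a monotone norm can be brought to bear.
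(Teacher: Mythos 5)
Your argument is correct, but it is not the route the paper takes: the paper's entire proof is a one-line citation of Corollary~1.3.34 in Berman--Plemmons \cite{BP87}, which packages exactly this kind of Collatz--Wielandt-type bound for cone-preserving maps. What you do instead is reprove the bound from scratch: you use openness of $\intt K$ to upgrade $Ax\gg_K cx$ to $Ax\geq_K c'x$ with $c'>c$ (correctly noting that $x\neq 0$ since $0\notin\intt K$ for a pointed cone), iterate via the $\leq_K$-monotonicity of $A\in\pi(K)$ to get $A^kx\geq_K (c')^kx\geq_K 0$, and then convert the cone inequality into the norm inequality $\|A^k\|\geq (c')^k$ using a $K$-monotone norm, so that Gelfand's formula gives $r(A)\geq c'>c$. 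Each step checks out, including the bookkeeping that keeps all compared vectors in $K$. What the citation buys the authors is brevity and a pointer to the sharper two-sided characterisations in \cite{BP87}; what your version buys is self-containedness, and it fits the paper well since both ingredients you need (existence of a $K$-monotone norm and openness arguments around $\intt K$) are already set up in the preliminaries.
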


  \begin{proof}
   This follows readily from Corollary 1.3.34 of \cite{BP87}.
  \end{proof}

  A cone in $\R^n$ is called {\em simplicial} if it is given as the conical
  hull of exactly $n$ linearly independent vectors. A particular example
  is the positive orthant $\RR^n_+ := \{x \in \RR^n: x_i \geq 0, 1 \leq i
  \leq n\}$.  Simplicial cones are precisely the cones which induce a
  lattice structure, i.e. an ordering which admits the definition of
  maximum and minimum. We define $\max_K \{ x, y \}$ as the unique
  $z\in \R^n$ with the property that $z\geq_K x$ and $z\geq_K y$ and
  \begin{equation*}
       w \geq_K x \text{ and } w \geq_K y \quad \Longrightarrow \quad w \geq_K z\,.
  \end{equation*}
  The absolute value $|x|_K$ of a vector $x\in \RR^n$ with respect to a
  simplicial cone $K$ is then given by
  \begin{equation*}
      |x|_K := \max{} _K \{ x, -x \}\,.
  \end{equation*}
  Again in the case that $K= \RR^n_+$ we have that $|x|=|x|_{\RR^n_+}$ is defined by
  $|x|_i := |x_i|, i=1,\ldots,n$.

  If the cone is simplicial it is usual to sharpen the definition of a
  monotone norm to the requirement that
  \begin{equation*}
      |x|_K \geq_K |y|_K \quad \Rightarrow \quad \|x\| \geq \|y\|\,.
  \end{equation*}
  This is equivalent to the requirement that $\|x\| = \| \, |x|_K\, \|$ for
  all $x\in \RR^n$, see \cite[Theorem~2]{bauer1961absolute},
  \cite[Theorem~5.5.10]{HornJohn}.  Norms with the latter property are
  called {\em absolute}. Since there is the potential of misinterpretation
  of the term monotone, we will use monotone to denote the property
  defined in \eqref{eq:monnorm} and we will always speak of absolute
  norms, when we have a simplicial cone.

\section{Linear Inclusions and the Joint Spectral Radius}
\label{sec:linincl}

We now discuss semigroups of matrices ${\cal S}\subset \R^{n \times n}$
that have an associated concept of time and thus a growth rate. There are
several ways of introducing such semigroups and we discuss three of
them. The first corresponds to discrete time switched systems, the second
to continuous time switched systems and the third one encompasses switched
differential algebraic systems, \cite{Sho07,trenn2012linear}.

\smallskip

{\parindent0pt \bf A. The Discrete Time Case}

For a compact set ${\cal M}\subset \R^{n \times n}$ we consider the linear
inclusion
\begin{equation}
    \label{eq:lininpos}
    x(t+1) \in \{ Mx(t) \midset M \in {\cal M}\} \,, t\in \N\,.
\end{equation}

Solutions of \eqref{eq:lininpos} corresponding to the initial value $x_0$
are given by sequences $\{ x(t) \}_{t\in\N}$ where for each $t\in\N$ there
exists an $A(t)\in {\cal M}$ such that $x(t+1)=A(t)x(t)$. The evolution
operators generated by ${\cal M}$ are therefore the sets
\begin{equation*}
    {\cal S}_t:= \{ A(t-1) \dots A(0) \midset A(s) \in {\cal M}, s=0,\ldots,t-1 \} \,,
\end{equation*}
and the associated matrix semigroup is ${\cal S}:= \bigcup_{t\in\N} {\cal
  S}_t$, where we set ${\cal S}_0 :=\{I\}$. If we want to emphasise that
${\cal S}$ is generated by ${\cal M}$ in the discrete time setting, we
write ${\cal S} = {\cal S}({\cal M}, \N)$.

\smallskip

{\parindent0pt \bf B. The Continuous Time Case}

In the continuous time setting we define linear inclusions as
follows. Given a compact set of matrices ${\cal M}$ we consider a
linear inclusion of the form
\begin{equation}
    \label{eq:lininmetz}
    \dot{x} \in \{ Mx \midset M \in {\cal M}\} \,.
\end{equation}

The evolution operators defined by \eqref{eq:lininmetz} are given by
solutions to the differential equation
\begin{equation*}
    \dot \Phi_\sigma(t) = A(t) \Phi_\sigma(t)\,,\quad \Phi_\sigma(0) =I\,, 
\end{equation*}
where $\sigma:=A:\RR_+ \to {\cal M}$ is measurable. It is
possible to consider only piecewise continuous functions $A$ with locally
finitely many discontinuities; this neither changes the notions of
positivity nor of stability discussed below. The map $\sigma$ is called
the switching signal defining the differential equation and the notation
$\Phi_\sigma$ is a reminder that it is defined via a particular
switching signal. In this case the set of time $t$ evolution operators is given by
\begin{equation*}
    {\cal S}_t := \{ \Phi_\sigma(t) \midset \sigma: [0,t] \to {\cal M} \text{ measurable} \} 
\end{equation*}
and again ${\cal S}:= \bigcup_{t\in\RR_+} {\cal S}_t$, where we set ${\cal
  S}_0 :=\{I\}$. If we want to emphasise that ${\cal S}$ is generated by
${\cal M}$ in the continuous time setting, we write ${\cal S} = {\cal
  S}({\cal M}, \R_+)$.

\smallskip

{\parindent0pt \bf C. Switched Linear Systems with Jumps}

We now discuss a class of impulsive systems which encompasses the case of
switched DAEs, as shown in \cite{trenn2012linear}. Consider a compact set
${\cal M} \subset \R^{n \times n} \times \R^{n \times n}$, where each pair
$(A,\Pi) \in {\cal M}$ has the property that
\begin{equation*}
    \Pi^2 = \Pi \,,\quad A\Pi = \Pi A \,.
\end{equation*}

For a piecewise constant and right-continuous switching signal
$\sigma:\R_+ \to {\cal M}$, $t\mapsto (A_{\sigma(t)}, \Pi_{\sigma(t)})$,
with discontinuities $0=t_0 < t_1 < t_2 < \ldots < t_k \to \infty$ we
define the corresponding evolution operator for $t\in [t_k,t_k+1)$ by
\begin{equation}
\label{eq:DAEevdef}
    \Phi_\sigma(t,0) = e^{A
_{\sigma(t)} (t-t_{k-1})} \Pi_{\sigma(t_{k-1})} \ldots 
e^{A
_{\sigma(t_1)} (t_2-t_1)} \Pi_{\sigma(t_1)}e^{A
_{\sigma(t_0)} (t_1-t_0)} \Pi_{\sigma(t_0)}\,.
\end{equation}
Again, for every $t\geq 0$, ${\cal S}_t$ is defined as the set of all
possible evolution operators that can be defined by \eqref{eq:DAEevdef}
and ${\cal S}:= \bigcup_{t\in\RR_+} {\cal S}_t$. It is shown in
\cite[Lemma~6]{trenn2012linear} that this defines a semigroup. Also it is
shown in Proposition~10 of that reference that the sets ${\cal S}_t$ are
bounded as subsets of $\R^{n \times n}$ if and only if the set of
projections
\begin{equation*}
   {\cal M}_\Pi:=  \{ \Pi \;|\; \exists\ A \in \R^{n \times n} \text{ such that } (A,\Pi) \in {\cal M} \} 
\end{equation*}
is product bounded. The set of projections is product bounded if the
discrete semigroup ${\cal S}({\cal M}_\Pi, \N)$ is bounded.

\smallskip

{\parindent0pt \bf Growth Rates}

Given a semigroup defined in one of the ways we have discussed so far,
we define the joint spectral radius of ${\cal S}$ by setting
\begin{equation}
    \label{eq:rhodef}
    \rho({\cal S}) := \lim_{t\to \infty} \sup \{ \|S\| \midset S\in {\cal S}_t \}^{1/t}\,,
\end{equation}
where we will suppress the fact that depending on the situation at hand
$t\in \N$ or $t\in \R_+$.

It is well known that the limit exists, is independent of the norm
considered, and characterises the maximal { and uniform} exponential
growth of solutions to \eqref{eq:lininpos}, resp. \eqref{eq:lininmetz} or
\eqref{eq:DAEevdef} \cite{Bara88,Sho07,Jungers,trenn2012linear}. We will
need the following property of the joint spectral radius, which is
independent of the particular way in which the semigroup is defined:
\begin{equation}
    \label{eq:gsr}
    \rho({\cal S}) := \limsup_{t\to \infty} \sup \{ r(S) \midset S\in {\cal S}_t \}^{1/t}\,.
\end{equation}
Also in the discrete and the continuous time case we have by \cite{Bara88}
the convexity relation
\begin{equation}
    \label{eq:jsrconv}
    \rho({\cal S}({\cal M})) = \rho( {\cal S}(\conv {\cal M}))\,.
\end{equation}
An immediate consequence of \eqref{eq:gsr} and \eqref{eq:jsrconv} is the
property that
\begin{equation}
    \label{eq:convspec}
    r(A) \leq \rho({\cal S}({\cal M})) \,,\quad \forall A\in \conv {\cal M}\,.
\end{equation}
This property is proved using extremal norms in \cite[Part I]{Bara88} and an
alternative argument for this relation is provided in
\cite[Lemma~1]{blondel2005computationally}.

In the following we concentrate on $K$-positive switched systems for a
regular cone $K$.

It is clear that in the discrete time case ${\cal S}({\cal M},\N) \subset
\pi(K)$, if and only if ${\cal M}\subset \pi(K)$, whereas in the
continuous time case ${\cal S}({\cal M}, \R_+) \subset \pi(K)$ if and only
if ${\cal M}\subset \exppi(K)$. From these two relations it is easy to
see, that in the case of switched systems with jumps, we have that
$K$-positivity is equivalent to the to requirements (i) ${\cal M}_\Pi
\subset \pi(K)$ and (ii) $A\Pi \in \exppi(K)$ for all $(A,\Pi)\in {\cal M}$.

For positive systems irreducibility of matrices plays a decisive role.
\begin{definition}{{\bf (Irreducibility of Inclusions)}}
    \label{d:irred}
    Let $K\subset \R^n$ be a proper cone. A semigroup ${\cal S}\subset
    \pi(K)$ is called $K$-irreducible, if there exists a $t>0$ such that
    $\conv {\cal S}_t$ contains a $K$-irreducible element.
\end{definition}

While the previous definition has the advantage of being independent of
the particular definition of the semigroup, it is instructive to point out
what the definition amounts to in the different cases.  To this end the
following observation is of interest. In the continuous-time case we
provide an argument for matrices of the form $A+\lambda I, A\in 
\pi(K), \lambda \in \R$. Denoting $\Lambda := \{ \lambda I \midset
\lambda\in \R \}$, it is known that $\pi(K)+\Lambda \subset \exppi(K)$. For
general cones the two sets are distinct, but  for polyhedral cones
equality holds,
\cite{schneider1970cross,gritzmann1995cross}.

\begin{proposition}
    \label{p:irred}
    Let $K\subset \R^n$ be a proper cone.
    \begin{enum}
      \item If $\emptyset\neq {\cal M} \subset \pi(K)$, then
        $\conv {\cal M}$ contains a $K$-irreducible element if and only if
        for every nontrivial face $F$ of $K$ there exists an $A\in {\cal
          M}$ such that $AF \not \subset F$.
      \item If $\emptyset\neq {\cal M} \subset \pi(K)+\Lambda$ is bounded,
        then $\conv {\cal M}$ contains a $K$-irreducible element if and
        only if for every nontrivial face $F$ of $K$ there exists an $A\in
        {\cal M}$ such that $A\, \mathrm{span}\ F \not \subset
        \mathrm{span}\ F$.
    \end{enum}
 \end{proposition}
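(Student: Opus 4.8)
The plan is to treat part (i) directly and then reduce part (ii) to part (i). For part (i), recall that by the van der Graft characterisation an element $A \in \pi(K)$ is $K$-irreducible if and only if $A$ has no eigenvector on $\partial K$, equivalently $A$ leaves no nontrivial face invariant. So I need to show: $\conv {\cal M}$ contains a matrix leaving no nontrivial face invariant $\iff$ for every nontrivial face $F$ there is some $A \in {\cal M}$ with $AF \not\subset F$. The direction "$\Rightarrow$" is the easy one: if some $A \in {\cal M}$ leaves every nontrivial face $F$ invariant, then since $\conv {\cal M} \subset \pi(K)$ and every convex combination of maps fixing a given face $F$ also fixes $F$ (faces are convex cones closed under the operations involved), every element of $\conv {\cal M}$ is $K$-reducible. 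For the contrapositive of "$\Leftarrow$", suppose for every nontrivial face $F$ there is $A_F \in {\cal M}$ with $A_F F \not\subset F$; I must produce a single $K$-irreducible element of $\conv {\cal M}$. The natural candidate is a convex combination $B = \sum_j \lambda_j A_j$ with all $\lambda_j > 0$ using finitely many matrices from ${\cal M}$ that collectively "break" every face. The key point is that for a fixed face $F$, the set of matrices $A \in \pi(K)$ with $AF \subset F$ is convex, so $BF \subset F$ would force $A_F F \subset F$ unless the coefficient on $A_F$ vanishes — but all coefficients are positive, giving a contradiction, provided $A_F$ appears among the $A_j$. The finiteness is the delicate part: a priori there are infinitely many faces. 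I would handle this by a compactness/covering argument on a compact base $B$ of $K$, or alternatively observe that it suffices to break every \emph{maximal} nontrivial face and then argue inductively down the face lattice; but the cleanest route is: for each face $F$ pick $A_F$ and note that the "bad set" of points $\{x \in B : (I + \sum \alpha_i A^i) x \in \partial K\}$ shrinks appropriately — however, I expect the simplest rigorous version is to show that if $B = \sum_{A \in {\cal M}'} \lambda_A A$ for a \emph{finite} ${\cal M}' \subset {\cal M}$ chosen so that for every nontrivial face $F$ some $A \in {\cal M}'$ breaks $F$, then $B$ is irreducible, and then argue such a finite ${\cal M}'$ exists. Existence of the finite set follows because the faces of $K$ that are minimal among those \emph{not} broken by a given finite collection form a structure that strictly decreases; since $K$ has finitely many faces only if polyhedral, the honest argument must instead use (IR2): I would fix \emph{any} finite ${\cal M}' = \{A_1, \dots, A_m\}$ that breaks all faces — its existence I justify by noting the set of faces left invariant by a matrix is determined by which of finitely many "face-types" (via the relative interiors and Lemma~2.1 of \cite{vandergraft1968spectral}) it fixes, and the collection of faces not broken by $A$ has a well-defined set of maximal elements; iterating, one adds matrices until no nontrivial face survives, and this terminates because each added matrix strictly enlarges the set of broken faces in the partially ordered face lattice, whose relevant chains have length at most $n$.

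Granting the finite set ${\cal M}' = \{A_1,\dots,A_m\}$ with the breaking property, set $B := \frac{1}{m}\sum_{j=1}^m A_j \in \conv{\cal M} \subset \pi(K)$. Suppose for contradiction $B$ is $K$-reducible, so $BF \subset F$ for some nontrivial face $F$. Pick $x \in \relint_{H_F} F$, $x \neq 0$; then $x \in F$, and since each $A_j \in \pi(K)$ we have $A_j x \in K$, while $Bx = \frac1m \sum_j A_j x \in F$. Because $F$ is a face and $A_j x \geq_K 0$ with $\sum_j A_j x \in F$ (after the positive scaling), the defining property \eqref{eq:facedef} of a face forces each $A_j x \in F$. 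Now let $A_{j_0}$ be the matrix in ${\cal M}'$ that breaks $F$, i.e. $A_{j_0} F \not\subset F$. Since $x \in \relint_{H_F} F$, any $y \in F$ satisfies $\alpha y \leq_K x$ for suitable $\alpha > 0$, hence $0 \leq_K A_{j_0}(\alpha y) \leq_K A_{j_0} x \in F$, so by the face property $A_{j_0} y \in F$ for all $y \in F$ — contradicting $A_{j_0} F \not\subset F$. This establishes part (i). The step I expect to be the main obstacle is precisely the extraction of a \emph{finite} face-breaking subset of ${\cal M}$ in the non-polyhedral case; I will need Lemma~2.1 of \cite{vandergraft1968spectral} (every boundary point lies in the relative interior of a unique face) together with a descending-chain argument in the face lattice to make this rigorous without assuming finitely many faces.

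For part (ii), I reduce to (i) by a shift. Write each $M \in {\cal M} \subset \pi(K) + \Lambda$ as $M = A_M + \lambda_M I$ with $A_M \in \pi(K)$, $\lambda_M \in \R$; boundedness of ${\cal M}$ makes $\{\lambda_M\}$ bounded, so fix $c \in \R$ with $c + \lambda_M \geq 0$ for all $M$, and set $\tilde M := M + cI = A_M + (c+\lambda_M) I \in \pi(K)$, defining $\tilde{\cal M} := {\cal M} + cI \subset \pi(K)$. The map $M \mapsto M + cI$ is an affine bijection, so $\conv \tilde{\cal M} = \conv{\cal M} + cI$, and for any $N$, $N$ and $N + cI$ have the same eigenvectors; hence $N \in \conv{\cal M}$ is $K$-irreducible iff $N + cI \in \conv\tilde{\cal M}$ is $K$-irreducible. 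Applying part (i) to $\tilde{\cal M}$: $\conv\tilde{\cal M}$ contains a $K$-irreducible element iff for every nontrivial face $F$ some $\tilde M$ has $\tilde M F \not\subset F$. It remains to match the face condition: $\tilde M F \subset F$ iff $(A_M + (c+\lambda_M)I)F \subset F$. Since $F$ is a cone, $IF = F \subset F$ always; so whether $\tilde M$ fixes $F$ depends only on $A_M$, and in fact $\tilde M F \subset F \iff A_M\,\mathrm{span}\,F \subset \mathrm{span}\,F$ — the forward direction because $\tilde M F \subset F$ and $\tilde M F \ni \tilde M x$ for $x$ in the relative interior spanning $H_F$ forces $A_M x = \tilde M x - (c+\lambda_M)x \in H_F$ (as $F \subset H_F$), and the reverse because if $A_M H_F \subset H_F$ then $A_M$ restricted to $H_F$ is cross-positive with respect to the face $F$ (being a restriction of $A_M \in \pi(K)$), hence $\tilde M|_{H_F} = A_M|_{H_F} + (c+\lambda_M)I$ is exponentially $F$-nonnegative, and... actually the cleanest statement is just: $\tilde M F \not\subset F \iff A_M\,\mathrm{span}\,F \not\subset \mathrm{span}\,F$, which follows from the two inclusions above once one notes $\tilde M \in \pi(K)$ so $\tilde M F \subset K \cap H_{\tilde M F}$ and the face property pins $\tilde M F$ inside $F$ exactly when $H_F$ is invariant. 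Substituting this equivalence into the conclusion of part (i) applied to $\tilde{\cal M}$ yields exactly the statement of part (ii).
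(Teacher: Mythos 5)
Your overall architecture is right, and most of the individual pieces are sound: the easy direction, the observation that $(B_1+B_2)F\subset F$ with $B_1,B_2\in\pi(K)$ forces $B_iF\subset F$, the relative-interior argument showing that $A x\in F$ for a single $x\in\intt_{H_F}F$ already gives $AF\subset F$, and the reduction of (ii) to (i) by shifting with $cI$ and using $K\cap\mathrm{span}\,F=F$. But the step you yourself flag as the main obstacle --- extracting a \emph{finite} subset ${\cal M}'\subset{\cal M}$ that collectively breaks every nontrivial face --- is a genuine gap, and the justification you sketch does not close it. A non-polyhedral proper cone has infinitely many (typically uncountably many) nontrivial faces, e.g.\ every extreme ray of the Lorentz cone. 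Your termination argument says that each added matrix strictly enlarges the set of broken faces and that chains in the face lattice have length at most $n$; but the relevant poset is the poset of \emph{subsets of the set of faces}, not the face lattice itself, and a strictly increasing sequence of such subsets need not stabilise in finitely many steps when the ground set is infinite. Nothing in your sketch rules out that every finite subset of ${\cal M}$ leaves some nontrivial face invariant even though no single face is invariant under all of ${\cal M}$. Whether the finite-subcover claim is in fact true for general proper cones is beside the point; as written it is unproved, and it is exactly the load-bearing step.

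The paper sidesteps finiteness altogether. It first normalises ${\cal M}$ to be bounded (replacing $A$ by $A/\|A\|$), takes a countable dense sequence $\{A_1,A_2,\dots\}$ in ${\cal M}$ and weights $\varepsilon_k>0$ with $\sum_k\varepsilon_k=1$, and forms the \emph{infinite} convex combination $\overline{A}=\sum_k\varepsilon_kA_k\in\cl\conv{\cal M}$. For each nontrivial face $F$ the condition $AF\not\subset F$ is open, so it is inherited by some $A_j$ of the dense sequence, and then your own additivity observation gives $\overline{A}F\not\subset F$; hence $\overline{A}$ is $K$-irreducible. Finally, since $K$-irreducibility is open in $\pi(K)$ (Remark~\ref{rem:irred}), an element of $\cl\conv{\cal M}$ being irreducible forces $\conv{\cal M}$ itself to contain an irreducible element. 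To repair your argument you would either have to actually prove the finite-subcover statement for the face lattice of a general proper cone, or replace that step by this dense-sequence-plus-openness device.
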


\begin{proof}
    (i) As $A\in \pi(K)$ is $K$-irreducible if and only if $rA$ is for
    every $r>0$, we may replace every nonzero $A\in{\cal M}$ by
    $A/\|A\|$ and obtain a bounded set ${\cal M}$. It is thus sufficient
    to prove the claim for bounded sets ${\cal M}\subset \pi(K)$.

    If there exists a nontrivial face $F$ of $K$ such that $AF\subset F$
    for all $A\in {\cal M}$, then the same is clearly true for all $A\in
    \conv {\cal M}$ and so $\conv {\cal M}$ does not contain a
    $K$-irreducible element.

    Conversely, assume that for every nontrivial face $F$ of $K$ there
    exists an $A\in {\cal M}$ such that $AF \not \subset F$. By
    Remark~\ref{rem:irred} it is sufficient to show that $\cl \conv {\cal
      M}$ contains a $K$-irreducible element. Let ${\cal Q} = \{ A_1, A_2,
    \ldots \}$ be a dense sequence lying in ${\cal M}$ and choose a
    sequence $\{ \varepsilon_k \}_{k\in\N}$ such that
    \begin{equation*}
        \sum_{k=1}^\infty \varepsilon_k =1 \,,\quad\text{ and }\quad \varepsilon_k >0\,, \ \  \forall k\, \in \N\,.
    \end{equation*}
    By construction $\overline{A}:=\sum_{k=1}^\infty \varepsilon_k A_k \in
    \cl \conv {\cal M}$. For every nontrivial face $F$ of $K$ we may
    choose an $A\in {\cal M}$ such that $AF \not \subset F$ and as this is
    an open property there exists an index $j$ such that $A_jF \not
    \subset F$.  Now for any $B_1,B_2 \in \pi(K)$ we have as a consequence
    of \eqref{eq:facedef} that $\left(B_1+B_2\right)F \subset F$ implies
    $B_iF \subset F, i=1,2$. With this argument it follows that
    $\overline{A} F \not \subset F$ and as $F$ was arbitrary this shows
    that $\overline{A}$ is $K$-irreducible.

    (ii) This follows from (i), by considering ${\cal M} + rI \subset
    \pi(K)$ for some $r>0$ large enough.
\end{proof}

\begin{proposition}
\label{lem:irredchar}
Let $K\subset \R^n$ be a proper cone.
\begin{enum}
  \item Let ${\cal S}={\cal S}({\cal M}, \N)\subset \pi(K)$ be generated
    via the discrete inclusion \eqref{eq:lininpos}. Then ${\cal S}$ is
    $K$-irreducible if and only if $\conv {\cal M}$ contains a $K$-irreducible
    element.
  \item Let ${\cal S}={\cal S}({\cal M}, \R_+)\subset \pi(K)$ be generated
    via the continuous inclusion \eqref{eq:lininmetz}.  Then ${\cal S}$ is
    $K$-irreducible if and only if for every nontrivial face $F$ of $K$
    there exists an $A\in {\cal M}$ such that $A\, \mathrm{span}\ F \not
    \subset \mathrm{span}\ F$.

    In case that ${\cal M} \subset \pi(K) + \Lambda$ the latter statement
    is equivalent to the existence of an
    irreducible, exponentially nonnegative $A\in\conv {\cal M}$.
\end{enum}
\end{proposition}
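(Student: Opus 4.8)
The plan is to prove Proposition~\ref{lem:irredchar} by reducing both parts to Proposition~\ref{p:irred} and the structural lemma, Lemma~\ref{lem:expnonneg}, exploiting the semigroup structure of $\mathcal{S}_t$ in each case. The key observation throughout is that $K$-irreducibility of $\mathcal{S}$, as given in Definition~\ref{d:irred}, asks for \emph{some} $t>0$ with $\conv \mathcal{S}_t$ containing a $K$-irreducible element, whereas the face-theoretic conditions in Proposition~\ref{p:irred} are stated at the level of the generating set $\mathcal{M}$; the work consists in passing between these levels.

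For part (i), the discrete-time case, I would first note that $\mathcal{S}_1 = \mathcal{M}$, so if $\conv\mathcal{M}$ contains a $K$-irreducible element then $\mathcal{S}$ is $K$-irreducible by definition (take $t=1$). For the converse, suppose $\conv\mathcal{M}$ contains \emph{no} $K$-irreducible element; by Proposition~\ref{p:irred}(i) there is a nontrivial face $F$ of $K$ with $AF\subset F$ for every $A\in\mathcal{M}$. Then every product $A(t-1)\cdots A(0)$ with $A(s)\in\mathcal{M}$ also maps $F$ into $F$, hence every element of $\mathcal{S}_t$ leaves $F$ invariant, and so does every element of $\conv\mathcal{S}_t$ for every $t$; thus no $\conv\mathcal{S}_t$ contains a $K$-irreducible element and $\mathcal{S}$ is $K$-reducible. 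This gives the equivalence.

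For part (ii), the continuous-time case, the generators of $\mathcal{S}_t$ are the evolution operators $\Phi_\sigma(t)$, which are limits of products of matrix exponentials $e^{A_k s_k}$ with $A_k\in\mathcal{M}\subset\exppi(K)$. The clean direction is: if there is a nontrivial face $F$ with $A\,\mathrm{span}\,F\subset\mathrm{span}\,F$ for all $A\in\mathcal{M}$, then each $A$ restricted to $H_F$ is exponentially $F$-nonnegative (since $A\in\exppi(K)$ and $H_F$ is $A$-invariant, $e^{At}H_F\subset H_F$ and $e^{At}K\subset K$ force $e^{At}F\subset F$), so $e^{As}F\subset F$, hence every $\Phi_\sigma(t)$ and every element of $\conv\mathcal{S}_t$ leaves $F$ invariant, and $\mathcal{S}$ is $K$-reducible. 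Conversely, suppose for every nontrivial face $F$ there is $A\in\mathcal{M}$ with $A\,\mathrm{span}\,F\not\subset\mathrm{span}\,F$. Fix a small $t>0$ and any $A\in\mathcal{M}$; then $e^{At}\in\mathcal{S}_t$, and I would argue that $\conv\mathcal{S}_t$ contains a $K$-irreducible element by running the same averaging construction as in the proof of Proposition~\ref{p:irred}(i): take a dense sequence $\{A_j\}$ in $\mathcal{M}$, form $\overline{A}=\sum_k\varepsilon_k e^{A_k t}$ with $\varepsilon_k>0$ summing to $1$; this lies in $\cl\conv\{e^{At}:A\in\mathcal{M}\}\subset\cl\conv\mathcal{S}_t$, and using that $e^{At}F\subset F \Rightarrow A H_F\subset H_F$ (the implication established in the proof of Lemma~\ref{lem:expnonneg}) together with $A\,\mathrm{span}\,F\not\subset\mathrm{span}\,F$ for a suitable generator, plus the fact (consequence of \eqref{eq:facedef}) that $(B_1+B_2)F\subset F$ forces $B_iF\subset F$ for $B_i\in\pi(K)$, one shows $\overline{A}F\not\subset F$ for every nontrivial face $F$, hence $\overline{A}$ is $K$-irreducible; openness of $K$-irreducibility (Remark~\ref{rem:irred}) then moves it into $\conv\mathcal{S}_t$ itself.

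Finally, for the supplementary claim when $\mathcal{M}\subset\pi(K)+\Lambda$: here $\exppi(K)$ restricted to this affine subspace is well-behaved, and the face condition $A\,\mathrm{span}\,F\not\subset\mathrm{span}\,F$ for all $F$ is, by Proposition~\ref{p:irred}(ii), exactly the condition that $\conv\mathcal{M}$ contains a $K$-irreducible element; combining with Lemma~\ref{lem:expnonneg} (equivalence of (iv) with (i)–(iii)), such an element is an irreducible exponentially $K$-nonnegative matrix, giving the stated reformulation. The main obstacle I anticipate is the continuous-time converse: one must be careful that $\mathcal{S}_t$ (a set of evolution operators $\Phi_\sigma(t)$, in general only a \emph{closure} of products of exponentials) genuinely contains the pure exponentials $e^{At}$ for $A\in\mathcal{M}$ (it does, via the constant switching signal $\sigma\equiv A$) and that the averaging/density argument transfers verbatim to the nonlinear map $A\mapsto e^{At}$ — in particular that $A_jF\not\subset F$ (equivalently $A_j H_F\not\subset H_F$) implies $e^{A_j t}F\not\subset F$ for all sufficiently small $t>0$, which follows from the series expansion $e^{A_jt}=I+tA_j+O(t^2)$ and the closedness of $F$.
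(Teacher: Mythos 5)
Your part (i) and the reducibility direction of part (ii) coincide with the paper's argument and are fine (including your observation that $F=K\cap H_F$ justifies $e^{At}F\subset F$ on an invariant span, and that closedness of $F$ handles the relaxation to measurable signals). The genuine gap is in the converse of part (ii), in the choice of the time $t$. You fix a single small $t>0$ and claim that $A_jH_F\not\subset H_F$ implies $e^{A_jt}F\not\subset F$ ``for all sufficiently small $t$'', justified by $e^{A_jt}=I+tA_j+O(t^2)$. That expansion only yields a threshold $t^*(F,A_j)>0$ depending on the particular face $F$ (through a witness $x\in H_F$ with $A_jx\notin H_F$) and on $A_j$. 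A proper cone can have uncountably many nontrivial faces (the ice cream cone already does), and your dense sequence supplies countably many matrices, so nothing in your argument produces one $t$ that works simultaneously for every face and its assigned index $j$; the infimum of the relevant thresholds could a priori be $0$. Note also that your appeal to Lemma~\ref{lem:expnonneg} is slightly off: what its proof establishes is that $e^{At}F\subset F$ for \emph{all} $t>0$ forces $AH_F\subset H_F$; for a single fixed $t$ that implication is false in general (think of a periodic rotation with $e^{2\pi A}=I$).

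The paper closes exactly this hole by a different device: for each $j$ it considers the set $T_j$ of times at which the invariant subspaces of $e^{A_jt}$ coincide with those of $A_j$; each $T_j$ is the complement of a Lebesgue-null set, so $\bigcap_{j\in\N}T_j\neq\emptyset$, and any $\bar t$ in the intersection gives $e^{A_j\bar t}F\not\subset F$ uniformly over all faces $F$ and all indices $j$ at once, after which Proposition~\ref{p:irred}\,(i) applied to $\{e^{A\bar t}\midset A\in{\cal M}\}\subset{\cal S}_{\bar t}$ finishes the proof. Your route is repairable --- compactness of ${\cal M}$ bounds the spectra uniformly, so eigenvalue resonances $e^{(\lambda-\mu)t}=1$ cannot occur for $0<t<\pi/\sup_{A\in{\cal M}}\|A\|$, and on that interval the invariant subspace lattices of $e^{At}$ and $A$ coincide for every $A\in{\cal M}$ --- but some such uniform spectral (or measure-theoretic) argument must be supplied; the face-by-face series expansion does not deliver it.
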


\begin{proof}
    (i) As ${\cal S}_1={\cal M}$, it is clear that if $\conv {\cal M}$
    contains a $K$-irreducible element, then ${\cal S}$ is
    irreducible. Conversely, if $\conv {\cal M}$ does not contain a
    $K$-irreducible element, then by Proposition~\ref{p:irred}\,(i) there
    exists a nontrivial face $F$ of $K$ such that $AF\subset F$ for all
    $A\in {\cal M}$. It follows that for any $t\in \N$ we have $SF \subset
    F$ for all $S \in {\cal S}_t$. Then $SF \subset F$ for all $S \in
    \conv {\cal S}_t$ and as $t$ is arbitrary, ${\cal S}$ is
    $K$-reducible.

    (ii) If there exists a nontrivial face $F$ of $K$ such that for all
    $A\in {\cal M}$ we have $AH_F \subset H_F$, then for all $A\in {\cal
      M}$ and all $t\geq0$ we have $e^{At} F \subset F$. By classical
    relaxation arguments, we have that products of the form
    \begin{equation*}
        e^{A_k t_k} e^{A_{k-1}t_{k-1}} \ldots e^{A_1 t_1} \,,\quad
        A_j \in {\cal M}, t_j>0, j=1,\ldots, k, \sum_{j=1}^k t_j = t
    \end{equation*}
    lie dense in ${\cal S}_t({\cal M})$. This shows that $SF \subset F$
    for all $S\in \conv {\cal S}_t$ and as $t>0$ is arbitrary it follows
    that ${\cal S}$ is reducible.

    Conversely, assume that for every nontrivial face $F$ of $K$ there
    exists an $A\in {\cal M}$ such that $AH_F \not \subset H_F$. Choosing
    a dense sequence ${\cal Q} = \{ A_1, A_2, \ldots \}$ lying in ${\cal
      M}$ we have as before that for every nontrivial face $F$ of $K$
    there exists an $A_j\in {\cal M}$ such that $A_jH_F \not \subset
    H_F$. For every index $j$ consider the set $T_j \subset \R_+$ of times
    $t$ for which the invariant subspaces of $e^{A_j t}$ coincide with
    those of $A_j$. It is well known that $T_j$ is the complement of a set
    of Lebesgue measure $0$. It follows that there exists a $\bar{t} \in
    \bigcap_{j\in\N}T_j$. Thus for all nontrivial faces $F$ of $K$ there
    exists an index $j$ such that $e^{A_j \bar{t}}F \not \subset F$.  It
    then follows from Proposition~\ref{p:irred}\,(i) that $\conv {\cal
      S}_{\bar{t}}$ contains an irreducible element.

The final statement follows from Proposition~\ref{p:irred}\,(ii).

\end{proof}

\section{Extremal Norms}
\label{sec:exnorms}

In the analysis of linear inclusions extremal and Barabanov norms play an
interesting role. In this paper we restrict our attention to extremal
norms. Indeed, in the situations we consider we cannot guarantee that a
Barabanov norm exists.

\begin{definition}{}
    \label{d:exnorm}
    Let ${\cal S}= \bigcup_{t\geq0} {\cal S}_t \subset \R^{n \times n}$ be
    a semigroup. A norm $v$ on $\RR^n$ is called extremal for ${\cal S}$,
    if for all $x\in \RR^n$ and all $t\geq0$ we have
    \begin{equation}
        \label{eq:exnormineq}
        v(Sx) \leq \rho({\cal S})^t v(x) \,,\quad \forall S\in {\cal S}_t\,.
    \end{equation}
\end{definition}

In particular, this means that if the semigroup is exponentially stable,
which is equivalent to $\rho({\cal S})<1$, then an extremal norm is a
Lyapunov function that not only characterises exponential stability but
also the precise growth rate of the system.

The interesting fact is that existence of a $K$-irreducible element in
the convex hull of some ${\cal S}_t$ guarantees the existence of an
extremal norm. Note that in general extremal norms need not exist.  It is
known that existence is equivalent to the boundedness of the semigroup $\{
\rho^{-t}({\cal S}) S \midset t\geq 0, S\in {\cal S}_t \}$, \cite{Kozy90},
but this is a criterion that is hard to check in general.  An additional
benefit of the norm constructed here is that it can be chosen to be
monotone.

\begin{theorem}{}
    \label{t:nounbounded}
    Let $K\subset \R^n$ be a proper cone.  Let ${\cal S}\subset \pi(K)$ be
    a $K$-irreducible semigroup. If $\rho({\cal S})=1$, then ${\cal S}$ is
    bounded.
\end{theorem}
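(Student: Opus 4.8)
The plan is to argue by contradiction: suppose $\mathcal{S}$ is $K$-irreducible with $\rho(\mathcal{S})=1$ but $\mathcal{S}$ is unbounded. The strategy is to extract from an unbounded sequence in $\mathcal{S}$ a nonzero limiting operator that is ``infinitely large'' in a direction compatible with the cone, and then use $K$-irreducibility together with Lemma~\ref{lem:specrad} to produce an element of $\mathcal{S}$ (or of $\conv\mathcal{S}_t$ for some $t$) whose spectral radius exceeds $1$, contradicting $\rho(\mathcal{S})=1$ via \eqref{eq:convspec} or \eqref{eq:gsr}.

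Concretely, first I would fix a compact base $B$ of $K$ (intersection of $K$ with a hyperplane) and a monotone norm $\|\cdot\|$. If $\mathcal{S}$ is unbounded, pick $S_k\in\mathcal{S}_{t_k}$ with $\|S_k\|\to\infty$. Since all $S_k\in\pi(K)$ and $K$ is full, $\|S_k\|$ is comparable to $\sup_{x\in B}\|S_k x\|$, so there are $x_k\in B$ with $\|S_k x_k\|\to\infty$; by compactness of $B$ pass to a subsequence with $x_k\to x_\infty\in B$, so $x_\infty\in K\setminus\{0\}$. Normalising, set $T_k := S_k/\|S_k\|$; these lie in the compact set $\{S\in\pi(K):\|S\|=1\}$, so along a further subsequence $T_k\to T_\infty\in\pi(K)$ with $\|T_\infty\|=1$, hence $T_\infty\neq 0$. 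The key point is to show $T_\infty$ is ``supported on $K$'' in a usable way: since $\rho(\mathcal{S})=1$, for the monotone norm and any $\varepsilon>0$ we have $\|S\|\le C_\varepsilon(1+\varepsilon)^t$ is \emph{not} available (that is precisely what we are proving), so instead I would use that $\rho(\mathcal S)=1$ forces $r(S)\le 1$ for all $S\in\mathcal S$ by \eqref{eq:gsr}, and more importantly for all $S\in\conv\mathcal S_t$ by \eqref{eq:convspec}.

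Here is the mechanism I expect to work. Because $\mathcal S$ is $K$-irreducible, there is $t_0>0$ and a $K$-irreducible $R\in\conv\mathcal S_{t_0}$. By (IR2), $(I+R)^{n-1}(K\setminus\{0\})\subset\intt K$, so the compact set $C:=(I+R)^{n-1}B\subset\intt K$, and by Lemma~\ref{lem:base} there is $\delta>0$ with $\delta x\ll_K y$ for all $x\in B$, $y\in C$. Now consider the operators $W_k := (I+R)^{n-1}T_k \in \conv\mathcal S_{\tau_k}$ (rescaled): since $T_k x_k$ has norm tending to infinity and (after normalising $T_k x_k/\|T_k x_k\|$ and passing to a subsequence) converges to some $z_\infty\in K\setminus\{0\}$, applying $(I+R)^{n-1}$ moves $z_\infty$ into $\intt K$, and I can arrange that $(I+R)^{n-1}T_k x_k \gg_K c\,x_k$ for a fixed $c>1$ and all large $k$ — this is where the blow-up $\|T_k x_k\|\to\infty$ is spent against the fixed $\delta$ from Lemma~\ref{lem:base}. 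Unwinding the normalisation, $(I+R)^{n-1}S_k \in \conv\mathcal S_{t_0+t_k}$ satisfies $(I+R)^{n-1}S_k x_k \gg_K c\,x_k$ with $c>1$, so by Lemma~\ref{lem:specrad}, $r\big((I+R)^{n-1}S_k\big) > c > 1$; but this element lies in $\conv\mathcal S_{t_0+t_k}$, contradicting \eqref{eq:convspec}. (One must be slightly careful: $\conv\mathcal S_t$ need not equal $\mathcal S_t$, but \eqref{eq:convspec}, \eqref{eq:gsr} give $r(A)\le\rho(\mathcal S)=1$ for $A\in\conv\mathcal S_t$ in the discrete and continuous cases, and the jump case is handled by the semigroup property; alternatively one expands $(I+R)^{n-1}$ and uses that a convex combination of elements of $\mathcal S$ with spectral radius $>1$ already violates \eqref{eq:gsr} after taking powers.)

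The main obstacle I anticipate is the bookkeeping that turns the qualitative statement ``$T_k x_k$ blows up and lands, after applying $(I+R)^{n-1}$, strictly inside $K$'' into the clean quantitative inequality $(I+R)^{n-1}S_k x_k \gg_K c\,x_k$ with a \emph{uniform} $c>1$: this requires combining the convergence $x_k\to x_\infty\in B$, the convergence of the normalised images $T_k x_k/\|T_k x_k\|$ to a point of $K\setminus\{0\}$, the openness of $\intt K$, and Lemma~\ref{lem:base} applied to the compact set $(I+R)^{n-1}B$, all simultaneously and stably under the subsequence extractions. A secondary technical point is ensuring $R$ can be taken in some $\conv\mathcal S_{t_0}$ with $(I+R)^{n-1}$ expressible (up to scaling) as an element of $\conv\mathcal S_{t}$ for a suitable $t$, which follows since $I\in\mathcal S_0$ and $\mathcal S$ is a semigroup closed under the relevant products, so that the final operator whose spectral radius we bound genuinely lies in the convex hull of some $\mathcal S_t$; the scaling factor between $(I+R)^{n-1}S_k$ and its normalised representative in $\conv\mathcal S_t$ is absorbed because $\rho(\mathcal S)=1$ makes the normalisation constant irrelevant to the strict inequality $r>1$.
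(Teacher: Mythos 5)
Your proposal is correct and follows essentially the same route as the paper's proof: unboundedness yields $S_k x_k\in c_k B$ with $c_k\to\infty$, the $K$-irreducible element $A\in\conv{\cal S}_t$ combined with (IR2) and Lemma~\ref{lem:base} gives $(I+A)^{n-1}S_k x_k\gg_K \delta c_k x_k$, Lemma~\ref{lem:specrad} then forces a spectral radius exceeding $1$, and the $\eta_n$-rescaled operator is placed in the convex hull of a finite subset of ${\cal S}$ to contradict \eqref{eq:convspec} and \eqref{eq:gsr}. The subsequence/limit machinery around $T_\infty$ and $z_\infty$ in your second paragraph is not needed and is in fact bypassed by your own third paragraph, which is exactly the paper's argument.
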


\begin{proof}
    Let $B$ be a compact base of $K$. Let $t>0$ be such that $\conv {\cal
      S}_t$ contains a $K$-irreducible element, which we denote by
    $A$. Assume to the contrary that ${\cal S}$ is unbounded. Thus
    there exists a sequence $\{ S_k \}_{k\in\N} \subset {\cal S}$, a
    sequence $c_k \to \infty$ and vectors $x_k\in B$ such that
    \begin{equation}
        \label{eq:unboundedcond}
        S_k x_k \in c_k B\,.
    \end{equation}
    As $A$ is irreducible, we have using (IR2) that
    \begin{equation}
        \label{eq:irredB}
        (I+A)^{n-1} B \subset \intt K\,.
    \end{equation}
    As $B$ is compact, so is the continuous image $(I+A)^{n-1} B$ and by
    Lemma~\ref{lem:base} there exists a constant $\delta >0$ such
    that for all $x\in B$ we have
    \begin{equation}
        \label{eq:irredB2a}
        (I+A)^{n-1} B \gg_K \delta x\,.
    \end{equation}
    Combining \eqref{eq:unboundedcond} and \eqref{eq:irredB2a} we obtain
    that
    \begin{equation}
        \label{eq:irredconc}
        (I+A)^{n-1}S_k x_k \gg_K \delta c_k x_k\,.
    \end{equation}
    By Lemma~\ref{lem:specrad} we obtain $r( (I+A)^{n-1}S_k) > \delta
    c_k$. Now by the binomial theorem
    \begin{equation*}
        (I+A)^{n-1} = \sum_{k=0}^{n-1}
        \begin{pmatrix}
            n-1\\ k
        \end{pmatrix}
        A^k
    \end{equation*}
    and so by defining
    \begin{equation}
\label{eq:etadef}
        \eta_n := \left(\sum_{k=0}^{n-1}
        \begin{pmatrix}
            n-1\\ k
        \end{pmatrix}\right)^{-1}
    \end{equation}
    we see that $\eta_n(I+A)^{n-1}S_k$ is a convex combination of
    elements of the set 
    \begin{equation*}
        \overline{M} = \{ S_k, AS_k,\ldots, A^{n-1}S_k \} \,.
    \end{equation*}
    On the other hand $A$ is a convex combination of elements in ${\cal
      S}_t$ and inductively we see, that $A^p$ is a convex combination of
    elements in ${\cal S}_{tp}$. Therefore there exists a set $\tilde M_k
    \subset {\cal S}$ such that 
    \begin{equation*}
        \eta_n(I+A)^{n-1}S_k \in \conv \tilde M_k\,.
    \end{equation*}
    Also, for $k$ large enough $r(\eta_n(I+A)^{n-1}S_k) >
    \eta_n\delta c_k > 1$. Using \eqref{eq:convspec} this implies
    \begin{equation}
        \label{eq:rhoconc}
        \rho(\tilde{M}_k, \N) > 1\,,
    \end{equation}
    and by \eqref{eq:gsr} some product of the matrices in $\tilde{M}_k$
    has a spectral radius larger than $1$. Now ${\cal S}(\tilde{M}_k,\N)
    \subset {\cal S}$ and so an element in ${\cal S}$ has a spectral
    radius larger than $1$. Again using \eqref{eq:gsr} this contradicts $ 1 =
    \rho({\cal S})$.  This contradiction completes the proof.
\end{proof}

The previous result now allows us to define monotone extremal norms, or even
absolute, extremal norms in the case that $K$ is simplicial.

\begin{theorem}
    \label{p:metextr}
    Let $K$ be a proper cone in $\R^n$. If ${\cal S}\subset \pi(K)$ is a
    $K$-irreducible semigroup, then there exists a monotone extremal norm
    $v$ for ${\cal S}$. If $K$ is simplicial, then the norm
    $v$ may be chosen to be absolute.
\end{theorem}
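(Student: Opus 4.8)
My plan is to rescale $\mathcal S$ so that $\rho(\mathcal S)=1$, invoke Theorem~\ref{t:nounbounded} to deduce that $\mathcal S$ is bounded, and then build the extremal norm as a supremum of a suitable reference norm over the orbit of $\mathcal S$, with monotonicity (resp.\ absoluteness) inherited from a monotone (resp.\ absolute) reference norm together with the cone-preservation $\mathcal S\subset\pi(K)$. For the rescaling, note first that $\rho(\mathcal S)>0$: since $\mathcal S$ is $K$-irreducible, some $\conv\mathcal S_t$ contains a $K$-irreducible $A\in\pi(K)$, which by the Perron--Frobenius theory for cones (\cite{vandergraft1968spectral,BP87}) has $r(A)>0$, and applying \eqref{eq:convspec} to the discrete semigroup generated by $\mathcal S_t$ gives $r(A)\le\rho(\mathcal S)^t$. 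Put $\rho:=\rho(\mathcal S)$ and pass to the graded semigroup $\widehat{\mathcal S}:=\bigcup_t\rho^{-t}\mathcal S_t$. It is again a subsemigroup of $\pi(K)$ (multiplication by a positive scalar preserves $\pi(K)$), it is again $K$-irreducible (for any face $F$ of $K$ one has $AF\subset F$ iff $(\rho^{-t}A)F\subset F$, so $\rho^{-t}\conv\mathcal S_t$ still contains a $K$-irreducible element), $\rho(\widehat{\mathcal S})=1$, and a norm $v$ satisfies \eqref{eq:exnormineq} for $\mathcal S$ exactly when $v(\widehat S x)\le v(x)$ for all $\widehat S\in\widehat{\mathcal S}$ and all $x\in\R^n$. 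Hence we may assume $\rho(\mathcal S)=1$, and Theorem~\ref{t:nounbounded} then gives that $\mathcal S$ is bounded.

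\emph{The monotone extremal norm.} Fix a norm $\|\cdot\|_0$ on $\R^n$ that is monotone with respect to $K$ (it exists by \cite[p.~38]{KrasLifs89}) and set
\[
    \bar v(x):=\sup_{S\in\mathcal S}\|Sx\|_0 .
\]
Boundedness of $\mathcal S$ makes $\bar v$ finite; as a supremum of seminorms it is itself a seminorm, and since $I\in\mathcal S_0\subset\mathcal S$ we have $\bar v(x)\ge\|x\|_0>0$ for $x\neq0$, so $\bar v$ is a norm. If $T\in\mathcal S$ then $\{ST\midset S\in\mathcal S\}\subset\mathcal S$, so $\bar v(Tx)=\sup_{S\in\mathcal S}\|STx\|_0\le\sup_{S'\in\mathcal S}\|S'x\|_0=\bar v(x)$, which, as $\rho(\mathcal S)=1$, is exactly \eqref{eq:exnormineq}. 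Finally, if $x\geq_K y\geq_K 0$ then every $S\in\mathcal S\subset\pi(K)$ satisfies $Sx\geq_K Sy\geq_K 0$, hence $\|Sx\|_0\ge\|Sy\|_0$ by monotonicity of $\|\cdot\|_0$; taking the supremum over $S$ yields $\bar v(x)\ge\bar v(y)$. So $\bar v$ is a monotone extremal norm. (That \emph{some} extremal norm exists is already implied by boundedness of $\widehat{\mathcal S}$ via \cite{Kozy90}; the content of the construction is that it transports monotonicity.)

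\emph{The absolute norm for simplicial $K$.} Keep the monotone $\bar v$ constructed above. The extra ingredient is the order inequality
\[
    |Sx|_K\leq_K S\,|x|_K\qquad(S\in\pi(K),\ x\in\R^n),
\]
which holds because $|x|_K\geq_K x$ and $|x|_K\geq_K -x$ imply, after applying the order-preserving map $S$, that $S\,|x|_K\geq_K Sx$ and $S\,|x|_K\geq_K -Sx$, whence $S\,|x|_K\geq_K\max_K\{Sx,-Sx\}=|Sx|_K$. Define $v(x):=\bar v(|x|_K)$. From the lattice identities $|rx|_K=|r|\,|x|_K$ and $0\leq_K|x+y|_K\leq_K|x|_K+|y|_K$ (valid since $K$ is simplicial), together with absolute homogeneity, monotonicity and subadditivity of $\bar v$, one checks that $v$ is absolutely homogeneous and subadditive; and $v(x)=0$ forces $|x|_K=0$, hence $x\in K\cap(-K)=\{0\}$, so $v$ is a norm. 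Since $|x|_K\geq_K 0$ gives $\bigl|\,|x|_K\,\bigr|_K=|x|_K$, we obtain $v(|x|_K)=v(x)$, that is $\|x\|=\|\,|x|_K\,\|$, so $v$ is absolute. For extremality, for any $S\in\mathcal S$,
\[
    v(Sx)=\bar v(|Sx|_K)\le\bar v\bigl(S\,|x|_K\bigr)\le\bar v(|x|_K)=v(x),
\]
the first inequality by $0\leq_K|Sx|_K\leq_K S\,|x|_K$ and monotonicity of $\bar v$, and the second by $|x|_K\in K$ and extremality of $\bar v$. Hence $v$ is an absolute extremal norm.

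I expect the main obstacle to be the simplicial case: one has to verify that replacing $\bar v$ by $\bar v\circ|\cdot|_K$ still produces a norm and, above all, still produces an \emph{extremal} norm, and both of these reduce to the single order inequality $|Sx|_K\leq_K S\,|x|_K$, i.e.\ to the correct interaction of the cone-preservation of $\mathcal S$ with the lattice absolute value. A secondary point needing care is the rescaling step, where one must confirm that $\widehat{\mathcal S}$ is genuinely again a $K$-irreducible subsemigroup of $\pi(K)$ of spectral radius $1$, so that Theorem~\ref{t:nounbounded} is applicable to it.
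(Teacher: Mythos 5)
Your proposal is correct and follows essentially the same route as the paper: rescale so that $\rho({\cal S})=1$, invoke Theorem~\ref{t:nounbounded} for boundedness, and take $v(x)=\sup_{S\in{\cal S}}\|Sx\|$ (resp.\ $\sup_{S\in{\cal S}}\|S\,|x|_K\|$ in the simplicial case). The only difference is cosmetic: you make explicit the inequality $|Sx|_K\leq_K S\,|x|_K$ underlying extremality of the absolute norm, which the paper leaves implicit in the phrase ``the extremality property follows as in \eqref{eq:df}''.
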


\begin{proof}
    As ${\cal S}$ is irreducible there exist $t>0$ and a $K$-irreducible
    $A\in {\cal S}_t$. Now $r(A)$ is a simple eigenvalue of $A$,
    \cite{KrasLifs89}, and so necessarily $r(A) >0$. Using
    \eqref{eq:convspec} this implies $\rho({\cal S})>0$. Thus considering
    the semigroup $\{ \rho^{-t}({\cal S}) S \midset t\geq 0, S\in {\cal
      S}_t \}$, we may assume without loss of generality that $\rho({\cal
      S})=1$. Using Theorem~\ref{t:nounbounded} it follows that ${\cal S}$
    is bounded.

    An extremal norm may then be defined in the following way, see also
    \cite{Kozy90}.  Let $\|\cdot\|$ be a $K$-monotone norm on
    $\RR^n$. Then define $v: \R^n \to \R$ by setting, for $x\in \R^n$,
    \begin{equation}
        \label{eq:defextnorm}
        v(x) := \sup \{ \| S x\| \midset S\in {\cal S} \} \,.
    \end{equation}
    It is clear that $v$ is positively homogeneous and positive definite,
    as $I\in{\cal S}$, and well-defined by boundedness of ${\cal S}$. The
    triangle inequality for $v$ follows from
    \begin{align*}
        v(x+y) = \sup \{ \| S (x + y)\| \midset S\in
        {\cal S} \}  \leq \sup \{ \| S x\| +\|S y\| \midset S\in
        {\cal S} \} \leq v(x) + v(y) \,.
    \end{align*}
    Using the assumption $\rho({\cal S})=1$ and the semigroup property of
    ${\cal S}$ we obtain extremality of $v$ from
    \begin{equation}
        \label{eq:df}
        v(Sx) = \sup \{ \| TS x \| \midset T\in {\cal S} \} \leq v(x)\,. 
    \end{equation}
    Finally the monotonicity of $v$ is inherited from the monotonicity of
    $\|\cdot\|$ as follows. For $0\leq_K x\leq_K y$
    we have $0\leq_K Sx \leq_K Sy$ for $S\in{\cal S}$ and so $\|Sx\| \leq
    \|Sy\|$ for all  $S\in{\cal S}$. Hence
    \begin{equation*}
       v(x) = \sup \{ \| S x\| \midset S\in {\cal S} \} \leq \sup \{ \| S y\| \midset S\in {\cal S} \}=
    v(y) \,.
    \end{equation*}

    If $K$ is simplicial, then we choose an absolute norm $\|\cdot\|$ to
    perform a variant of the construction described above. In this case we
    define
    \begin{equation*}
        v(x) := \sup \{ \| S |x|_K\| \midset S\in {\cal S} \} \,.
    \end{equation*}
    By definition $v$ satisfies $v(x) = v(|x|_K)$ for all $x\in
    \R^n$. Also if $0\leq_K |x|_K \leq_K |y|_K$, then
    \begin{equation}
        \label{eq:monabs}
        v(x) = \sup \{ \| S |x|_K\| \midset S\in {\cal S} \}
        \leq \sup \{ \| S |y|_K\| \midset S\in {\cal S} \} = v(y)\,.
    \end{equation}
  The triangle inequality for $v$ then follows from
    \begin{align*}
        v(x+y) = v(|x+y|_K) \stackrel{\eqref{eq:monabs}}{\leq}
        v(|x|_K+|y|_K) = \sup \{ \| S (|x|_K + |y|_K)\| \midset S\in {\cal
          S} \} \\ \leq \sup \{ \| S |x|_K\| +\|S |y|_K)\| \midset S\in
        {\cal S} \} \leq v(|x|_K) + v(|y|_K) = v(x) + v(y)\,.
    \end{align*}
    Positive definiteness and homogeneity are again clear and the
    extremality property follows as in \eqref{eq:df}. This concludes the
    proof. ~\hfill
\end{proof}

Combining the characterisation of $K$-irreducibility from
Proposition~\ref{lem:irredchar} we immediately obtain the following corollary.

\begin{corollary}
    \label{c:irredextnorm}
Let $K\subset \R^n$ be a proper cone.
    \begin{enum}
      \item If ${\cal M}\subset \pi(K)$ is compact and if $\conv {\cal M}$
        contains a $K$-irreducible element, then there exists a
        $K$-monotone, extremal norm $v$ for ${\cal S}$ generated by
        \eqref{eq:lininpos}.
      \item If ${\cal M}\subset \exppi(K)$ is a compact and if for every
        nontrivial face $F$ of $K$ there exists an $A\in {\cal M}$ such that
        $A\, \mathrm{span}\ F \not \subset \mathrm{span}\ F$, then there
        exists a $K$-monotone, extremal norm $v$ for ${\cal S}$ generated by
        \eqref{eq:lininmetz}.
    \end{enum}   
\end{corollary}

\section{Regularity of the Joint Spectral Radius}
\label{sec:regularity}

For $K$-irreducible matrices $A\in \RR^{n \times n}_+$ it is well known
that the spectral radius $\rho(A)$ is a simple eigenvalue of $A$ and that
all eigenvalues $\lambda$ of $A$ of modulus equal to the spectral radius
are simple. It is a consequence of standard perturbation theory, that
under these conditions the spectral radius as a function of the entries of
a matrix is Lipschitz continuous on a neighbourhood of $A$. In this
section we show that by the previous results the same is true for positive
linear inclusions that are $K$-irreducible.  This result complements the
result of \cite{Wirt02}, where it was shown that the joint spectral radius
is Lipschitz continuous on the set of compact matrix sets that are {\em
  irreducible} in the sense of representation theory. In this context this
name is a bit misleading, because in the nomenclature of \cite{Wirt02} a
set of matrices is irreducible, if no subspace other than the trivial
ones, $\{ 0 \} $ and $\RR^n$, is invariant under all matrices in ${\cal
  M}$. This property is not implied by the assumptions in
Theorem~\ref{p:metextr}.  To see this, consider a pair of positive
matrices with respect to $K = \R^{n \times n}_+$ with a common eigenvector
(take a set of row stochastic matrices for example).  Such a set will
automatically satisfy our assumptions but will clearly have a common
invariant subspace spanned by the common eigenvector.  Hence the set will
not be irreducible in the sense used in the work of Barabanov and others.
On the other hand, if a set of nonnegative or Metzler matrices has no
nontrivial common invariant subspace, it will be $K$-irreducible in our
sense.  Hence our assumption is strictly weaker than the usual one.

Let $A\in \R^{n \times n}$ and ${\cal M}\subset \R^{n \times n}$ be
closed. Then we define the distance from $A$ to ${\cal M}$ by
\begin{equation*}
    \dist(A,{\cal      M})\} := \min \{ \| A- M\| \midset M \in {\cal M} \}\,.
\end{equation*}
Note that as ${\cal M}$ is closed there exists a $B\in {\cal M}$ such that
$\|A-B\|= \dist(A,{\cal M})\}$.  The Hausdorff distance between compact
sets of matrices ${\cal M},{\cal N}$ is then defined by
\[H({\cal M},{\cal N}):= \max \{ \max_{A\in{\cal M}}\{ \dist(A,{\cal
  N})\},\max_{B\in{\cal N}} \{ \dist(B,{\cal M})\} \}\,.\] The particular
value of the Hausdorff distance depends on the norm we have chosen on
$\R^{n \times n}$, if we want to emphasise this we write
$H_{\|\cdot\|}({\cal M},{\cal N})$.

\begin{theorem}{}
    \label{t:lipschitzprop} 
Let $K\subset \R^n$ be a proper cone. 

{ (i) In the discrete time case
        the} joint spectral radius is locally Lipschitz continuous on the
      set
    \begin{equation*}
        {\cal P}_\N:= \{ {\cal M} \subset \pi(K) \midset {\cal M}\text{ is compact and $K$-irreducible}\} 
    \end{equation*}
    endowed with the Hausdorff metric.\\
    { (ii) In the continuous time case the joint spectral
    radius is locally Lipschitz continuous on the set
    \begin{equation*}
        {\cal P}_{\R_+}:= \{ {\cal M} \subset \exppi(K) \midset {\cal M}\text{ is compact and $K$-irreducible} \} 
    \end{equation*}
    endowed with the Hausdorff metric.}
\end{theorem}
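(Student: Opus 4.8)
The plan is to reduce both statements to a single quantitative estimate obtained from the extremal norm furnished by Theorem~\ref{p:metextr}, following the scheme of \cite{Wirt02}. First I would fix $\mathcal{M}_0 \in \mathcal{P}_\N$ (resp.\ $\mathcal{P}_{\R_+}$) and, by the scaling argument in the proof of Theorem~\ref{p:metextr}, assume $\rho(\mathcal{S}(\mathcal{M}_0)) = 1$; then $\mathcal{S}(\mathcal{M}_0)$ is bounded and carries a monotone extremal norm $v$, so $\|S\|_v \le 1$ for every $S$ in the semigroup. The key point is that this boundedness is \emph{stable under perturbation}: since $\mathcal{M}_0$ is compact and lies in the bounded semigroup's ``unit ball'' with respect to $v$, there is a uniform bound $\|M\|_v \le 1$ for $M \in \mathcal{M}_0$ (in the continuous-time case one argues instead with $e^{Mt}$ for small $t$, or passes to the generators and uses $\|e^{Mt}\|_v \le 1$). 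For a nearby set $\mathcal{M}$ with $H_v(\mathcal{M}_0,\mathcal{M}) \le \varepsilon$ one then gets $\|M\|_v \le 1 + \varepsilon$ for all $M \in \mathcal{M}$ (discrete case), hence $\|S\|_v \le (1+\varepsilon)^t$ for all $S \in \mathcal{S}_t(\mathcal{M})$, which yields the one-sided Lipschitz bound $\rho(\mathcal{S}(\mathcal{M})) \le 1 + \varepsilon$. In the continuous-time case the analogous estimate is $\|e^{Mt} - e^{M_0 t}\| \le t e^{ct}\,\varepsilon$ for $M_0$ the closest matrix in $\mathcal{M}_0$, giving $\|e^{Mt}\|_v \le e^{c'\varepsilon t}$ and again $\rho(\mathcal{S}(\mathcal{M})) \le e^{c'\varepsilon} \le 1 + C\varepsilon$ for $\varepsilon$ small.

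For the reverse inequality, the clean route is to exploit the symmetry of the roles of $\mathcal{M}_0$ and $\mathcal{M}$. By Remark~\ref{rem:irred}, $K$-irreducibility is an open property, so for $\varepsilon$ small every $\mathcal{M}$ with $H(\mathcal{M}_0,\mathcal{M}) \le \varepsilon$ still lies in $\mathcal{P}_\N$ (resp.\ $\mathcal{P}_{\R_+}$): indeed $\conv \mathcal{M}_0$ contains a $K$-irreducible element $A_0$, and the corresponding convex combination of perturbed matrices is still $K$-irreducible by openness. Hence $\mathcal{M}$ too admits a monotone extremal norm after rescaling by $\rho(\mathcal{S}(\mathcal{M}))^{-1}$, and applying the argument of the previous paragraph with the roles reversed gives $\rho(\mathcal{S}(\mathcal{M}_0)) \le \rho(\mathcal{S}(\mathcal{M})) \cdot (1 + C\varepsilon)$, i.e.\ $\rho(\mathcal{S}(\mathcal{M})) \ge (1+C\varepsilon)^{-1} \ge 1 - C'\varepsilon$. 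Combining the two bounds, $|\rho(\mathcal{S}(\mathcal{M})) - 1| \le C'\varepsilon = C' H(\mathcal{M}_0,\mathcal{M})$ on a neighbourhood of $\mathcal{M}_0$, which is precisely local Lipschitz continuity at $\mathcal{M}_0$; since $\mathcal{M}_0$ was arbitrary, we are done. (One should also note that the constants depend only on the extremal norm $v$ associated to $\mathcal{M}_0$, not on $\mathcal{M}$, which is what makes the estimate locally uniform.)

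The main obstacle I anticipate is the continuous-time bookkeeping: unlike the discrete case where $\mathcal{S}_1 = \mathcal{M}$ makes the perturbation estimate immediate, in the continuous case one must control $\Phi_\sigma(t)$ for switching signals $\sigma$ valued in the perturbed set $\mathcal{M}$, uniformly in $\sigma$ and over a fixed time interval. This is handled by a Gr\"onwall-type comparison: writing $\dot\Phi_\sigma = A(t)\Phi_\sigma$ with $A(t) \in \mathcal{M}$, one estimates $v(\Phi_\sigma(t)x)$ via $\frac{d}{dt} v(\Phi_\sigma(t)x) \le \mu_v(A(t))\, v(\Phi_\sigma(t)x)$ where $\mu_v$ is the logarithmic norm induced by $v$; since $v$ is extremal for the unperturbed semigroup one has $\mu_v(M_0) \le 0$ for $M_0 \in \mathcal{M}_0$ (as $\|e^{M_0 t}\|_v \le 1$), and $\mu_v$ is Lipschitz in its argument, so $\mu_v(A(t)) \le L\,\dist(A(t),\mathcal{M}_0) \le L\varepsilon$, giving $v(\Phi_\sigma(t)x) \le e^{L\varepsilon t} v(x)$ and hence $\rho(\mathcal{S}(\mathcal{M})) \le e^{L\varepsilon}$. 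The secondary technical point — checking that the estimate derived for one fixed $\mathcal{M}_0$ transfers to give Lipschitz continuity with a \emph{single} constant on a whole neighbourhood rather than merely continuity at the point — is dealt with by observing that for $\mathcal{M}_1,\mathcal{M}_2$ both within $\varepsilon/2$ of $\mathcal{M}_0$ one applies the bound to the pair $(\mathcal{M}_1,\mathcal{M}_2)$ directly, using that $\mathcal{M}_1$'s extremal norm has logarithmic-norm and operator-norm constants that are themselves bounded uniformly in a neighbourhood of $\mathcal{M}_0$ (this last uniformity is where one genuinely needs the construction \eqref{eq:defextnorm} to depend continuously enough on the data, or, more simply, one fixes $\mathcal{M}_0$ and accepts the standard fact that local Lipschitz continuity at every point is what ``locally Lipschitz'' means).
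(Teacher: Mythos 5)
Your overall scheme---a one-sided estimate $\rho(\mathcal{S}(\mathcal{M})) \le \rho(\mathcal{S}(\mathcal{M}_0)) + H_v(\mathcal{M}_0,\mathcal{M})$ obtained from an extremal norm $v$ of $\mathcal{M}_0$, followed by symmetrization---is the paper's scheme. But there is a genuine gap at the symmetrization step, and it sits exactly where the paper invests all of its technical effort. When you ``apply the argument with the roles reversed'' you must use an extremal norm $w$ of the \emph{perturbed} set $\mathcal{M}$, and the resulting bound is $\rho(\mathcal{S}(\mathcal{M}_0)) \le \rho(\mathcal{S}(\mathcal{M})) + H_w(\mathcal{M}_0,\mathcal{M})$. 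To convert $H_w$ into the reference Hausdorff metric $H$ you need a bound on the eccentricity $\ecc_{\|\cdot\|}(w)$, and this bound must be uniform over all $\mathcal{M}$ in a neighbourhood of $\mathcal{M}_0$, since $w$ changes with $\mathcal{M}$. Your parenthetical claim that ``the constants depend only on the extremal norm $v$ associated to $\mathcal{M}_0$, not on $\mathcal{M}$'' is therefore false for the reverse inequality, and the proposed escape---reading ``locally Lipschitz'' as Lipschitz-at-a-point---does not help: even the one-point estimate $\rho(\mathcal{S}(\mathcal{M}_0)) \le \rho(\mathcal{S}(\mathcal{M})) + C\,H(\mathcal{M}_0,\mathcal{M})$ already requires controlling $w$, and in any case the theorem asserts the two-variable local Lipschitz property.

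The missing ingredient is the paper's Proposition~\ref{p:boundedecc}: on a compact subset of $\mathcal{P}_\N$ one can choose $K$-monotone extremal norms whose eccentricity is bounded by a single constant $C$. Its proof is not soft; it is a contradiction argument that re-runs the mechanism of Theorem~\ref{t:nounbounded} quantitatively. If the eccentricities of the norms $v_k$ built by \eqref{eq:defextnorm} for $\mathcal{M}_k \to \mathcal{M}$ blew up, one would find $S_k \in \mathcal{S}(\mathcal{M}_k)$ and $x_k$ in a compact base $B$ of $K$ with $S_k x_k$ far out in the cone; multiplying by $(I+\overline{M}_k)^{n-1}$ for $K$-irreducible $\overline{M}_k \to \overline{M}$ and invoking Lemma~\ref{lem:base} and Lemma~\ref{lem:specrad} produces an element of the semigroup with spectral radius exceeding $1$, contradicting $\rho(\mathcal{M}_k)=1$ via \eqref{eq:convspec}. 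This is where $K$-irreducibility actually enters the regularity proof; your proposal uses irreducibility only to guarantee that nearby sets remain in $\mathcal{P}_\N$ and to invoke Theorem~\ref{p:metextr}, which is not enough. (Your Gr\"onwall treatment of the continuous-time forward bound is fine as far as it goes, but the paper's reduction---$\mathcal{M}\mapsto\mathcal{S}_t(\mathcal{M})$ is a Lipschitz set-valued map, so case (ii) follows from case (i)---avoids the same uniformity problem only because case (i) has already been established with a uniform constant.)
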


In the proof we follow the idea of \cite{Wirt02}. There the proof is based
on the consideration of the eccentricity of extremal norms corresponding
to different sets. In general, the eccentricity of a norm $v$ with respect
to a norm $\|\cdot\|$ is defined by
\begin{equation}
    \label{eq:eccdef}
    \ecc_{\|\cdot\|}(v) := 
\frac{\max \{v(x)\;|\;\Vert x\Vert =1\}}{\min \{v(x)\;|\;\Vert x\Vert =1\}}\,.
\end{equation}
Note that for any $A\in \mathbb{R}^{n\times n}$ we have for the induced
operator norm that 
\begin{equation}
\label{27.06.02b}
\frac{1}{\ecc_{\|\cdot\|}(v)}\Vert A\Vert \leq v(A)\leq \ecc_{\|\cdot\|}(v)%
\Vert A\Vert \,.
\end{equation}

The decisive property is now that the eccentricity of absolute extremal
norms is bounded on compact subsets of ${\cal P}_{\N}$. We note that an
analogous statement to \cite[Lemma~4.1]{wirth2005structure} is false here,
because we cannot exclude the possibility of positively homogeneous
functions that have an extremality property and vanish on a subspace. 
An example to this effect is given by the pair of matrices
\begin{equation*}
    \begin{bmatrix}
        0&1\\1&0
    \end{bmatrix} \,,\quad
    \begin{bmatrix}
        1&0\\0&1
    \end{bmatrix}
\end{equation*}
one of which is clearly irreducible and for which the following function
is extremal, but of course not a norm:
\begin{equation*}
    w(x) := \left\|
      \begin{bmatrix}
          1 & 1 
      \end{bmatrix} x \right\|_2\,.
\end{equation*}

\begin{proposition}
    \label{p:boundedecc}
    Let $K\subset \R^n$ be a proper cone.  Let ${\cal X}\subset{\cal
      P}_\N$ be compact (as a subset of the metric space $({\cal P}_\N,
    H)$) and let $\|\cdot\|$ be $K$-monotone, then there exists a bound
    $0<C<\infty$ such that for all sets ${\cal M}\in {\cal X}$ there
    exists a $K$-monotone norm $v$, which is extremal for ${\cal S}({\cal
      M},\N)$ and satisfies
    \begin{equation}
        \label{eq:eccbound}
        \ecc_{\|\cdot\|}(v) < C\,.
    \end{equation}
\end{proposition}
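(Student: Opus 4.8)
The plan is to argue by contradiction along a sequence of sets in $\mathcal{X}$, extracting a limit and using the $K$-irreducibility of the limit set together with the extremality inequalities to derive a contradiction. Concretely, suppose no uniform bound $C$ exists. Then for every $k\in\N$ there is a set $\mathcal{M}_k\in\mathcal{X}$ such that every $K$-monotone extremal norm $v$ for $\mathcal{S}(\mathcal{M}_k,\N)$ has $\ecc_{\|\cdot\|}(v)\geq k$. By compactness of $\mathcal{X}$ in the Hausdorff metric we may pass to a subsequence with $\mathcal{M}_k\to\mathcal{M}_\infty\in\mathcal{X}$. Note first that $\rho$ is continuous on $\mathcal{X}$ (indeed, the pieces of Theorem~\ref{t:lipschitzprop} we are trying to prove would give this, but continuity alone is classical and follows from \eqref{eq:rhodef} and uniform boundedness on $\mathcal{X}$); after rescaling each $\mathcal{M}_k$ by $\rho(\mathcal{S}(\mathcal{M}_k,\N))^{-1}$ — which does not change eccentricities of extremal norms, only normalises $\rho$ to $1$ — we may assume $\rho(\mathcal{S}(\mathcal{M}_k,\N))=1$ for all $k$ and $\rho(\mathcal{S}(\mathcal{M}_\infty,\N))=1$ as well.

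The key tool is a uniform version of Theorem~\ref{t:nounbounded}: I claim there is a single constant $L<\infty$ and a single time horizon (in fact a single $K$-irreducible matrix shape) that works for all $\mathcal{M}_k$ with $k$ large. Since $\mathcal{M}_\infty$ is $K$-irreducible, there is $t_\infty>0$ and a $K$-irreducible $A_\infty\in\conv\mathcal{S}_{t_\infty}(\mathcal{M}_\infty)$; $K$-irreducibility is an open property (Remark~\ref{rem:irred}), and $A_\infty$ depends continuously on the set in the Hausdorff metric, so for $k$ large there is a $K$-irreducible $A_k\in\conv\mathcal{S}_{t_\infty}(\mathcal{M}_k)$ with $A_k\to A_\infty$. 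By Lemma~\ref{lem:base} applied to the compact set $(I+A_\infty)^{n-1}B\subset\intt K$ we get a $\delta_\infty>0$ with $(I+A_\infty)^{n-1}B\gg_K\delta_\infty x$ for all $x\in B$; by continuity the same holds with $A_k$ in place of $A_\infty$ and $\delta=\delta_\infty/2$, uniformly in $k$ large. Then the computation in the proof of Theorem~\ref{t:nounbounded} gives a uniform bound: for all $S\in\mathcal{S}(\mathcal{M}_k,\N)$ and all $x\in B$, if $Sx\in cB$ then $r(\eta_n(I+A_k)^{n-1}S)>\eta_n\delta c$, and since this matrix lies in $\conv\tilde M$ for some $\tilde M\subset\mathcal{S}(\mathcal{M}_k,\N)$, \eqref{eq:convspec} together with $\rho(\mathcal{S}(\mathcal{M}_k,\N))=1$ forces $\eta_n\delta c\leq 1$, i.e. $c\leq(\eta_n\delta)^{-1}=:L$. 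Hence for $k$ large, every $S\in\mathcal{S}(\mathcal{M}_k,\N)$ maps $B$ into $\bigcup_{0\le c\le L}cB=\conv(B\cup\{0\})\cdot L$, which bounds $\|S\|$ by a constant depending only on $L$, $B$ and $\|\cdot\|$, uniformly in $k$.

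With this uniform bound $M:=\sup\{\|S\|\midset k \text{ large}, S\in\mathcal{S}(\mathcal{M}_k,\N)\}<\infty$ in hand, define $v_k(x):=\sup\{\|Sx\|\midset S\in\mathcal{S}(\mathcal{M}_k,\N)\}$ as in the proof of Theorem~\ref{p:metextr}. Each $v_k$ is a $K$-monotone extremal norm for $\mathcal{S}(\mathcal{M}_k,\N)$. For the upper eccentricity bound: $v_k(x)\leq M\|x\|$ since $I\in\mathcal{S}$ and every $S$ has $\|S\|\leq M$, while $v_k(x)\geq\|Ix\|=\|x\|$; hence $\max\{v_k(x)\midset\|x\|=1\}\leq M$ and $\min\{v_k(x)\midset\|x\|=1\}\geq 1$, giving $\ecc_{\|\cdot\|}(v_k)\leq M$ for all $k$ large. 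This contradicts $\ecc_{\|\cdot\|}(v_k)\geq k\to\infty$, and the proof is complete with $C:=M$ (adjusting for the finitely many small $k$, each of which admits some extremal norm of finite eccentricity by Theorem~\ref{p:metextr}, and taking the max).

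The main obstacle is the \emph{uniformity} in the third paragraph above: one must be careful that the $K$-irreducible element $A_k$, the constant $\delta$, and hence the resulting bound $L$ can all be chosen independently of $k$. This is exactly where compactness of $\mathcal{X}$ and the openness of $K$-irreducibility (Remark~\ref{rem:irred}) are essential — without them the per-set argument of Theorem~\ref{t:nounbounded} would give bounds degenerating as $k\to\infty$. A secondary technical point is continuity of $\rho$ on $\mathcal{X}$ and of the map $\mathcal{M}\mapsto A_{\mathcal{M}}$ (a suitable choice of $K$-irreducible element in $\conv\mathcal{S}_{t_\infty}$); the latter can be handled by fixing the convex-combination recipe (as in the proof of Proposition~\ref{p:irred}, a dense-sequence average) so that it varies continuously with the generating set in the Hausdorff metric.
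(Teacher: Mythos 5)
Your proposal is correct and follows essentially the same route as the paper: a contradiction argument along a convergent sequence of sets, a uniform $\delta$ obtained from Lemma~\ref{lem:base} together with the openness of $K$-irreducibility (Remark~\ref{rem:irred}), and the spectral-radius computation of Theorem~\ref{t:nounbounded} combined with \eqref{eq:convspec} to rule out unbounded expansion on the base $B$. The only (harmless) differences are organisational: you first extract a uniform bound on the semigroups ${\cal S}({\cal M}_k,\N)$ and then bound $\ecc_{\|\cdot\|}(v_k)$ directly, and you build $v_k$ from the given monotone norm rather than from an extremal norm of the limit set, whereas the paper derives the contradiction straight from the assumed blow-up of the eccentricity.
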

\begin{proof}
    We show the property locally in a neighbourhood of ${\cal M}\in{\cal
      P}_\N$, then the assertion follows by a standard compactness argument.

    So let ${\cal M}\in {\cal P}_\N$ and apply Theorem~\ref{p:metextr} to
    choose a $K$-monotone, extremal norm for ${\cal M}$. We claim that
    there is a neighbourhood of ${\cal M}$ in ${\cal P}_\N$ for which
    \eqref{eq:eccbound} holds. If this is false then we may pick sequences
    ${\cal M}_k\to{\cal M}$ and $C_k\to\infty$ such that every
    $K$-monotone, extremal norm of ${\cal M}_k$ has eccentricity exceeding
    $C_k$.

    As norms are convex functions, a norm is extremal for ${\cal M}$ if
    and only if it is extremal for $\conv {\cal M}$. Thus we may assume
    that all ${\cal M}_k$ and ${\cal M}$ are convex. In particular, by
    assumption there is a $K$-irreducible matrix $\overline{M} \in {\cal M}$.
    By Remark~\ref{rem:irred}, for $k$ large enough there are
    $K$-irreducible matrices $\overline{M}_k \in {\cal M}_k$ with $\overline{M}_k
    \to \overline{M}$.

    As the joint spectral radius is continuous, \cite{Wirt02}, we know
    that $\rho({\cal M}_k) \to \rho({\cal M})>0$.  This shows that
    $\rho^{-1}({\cal M}_k) {\cal M}_k \to \rho({\cal M})^{-1}{\cal M}$. As
    this rescaling does not change extremal norms, we may assume that all
    joint spectral radii involved are equal to $1$.

    Let $\|\cdot\|$ be a $K$-monotone, extremal norm for ${\cal M}$. We
    construct $K$-monotone extremal norms $v_k$ for ${\cal M}_k$ using
    \eqref{eq:defextnorm}. This is possible by the construction in
    Theorem~\ref{p:metextr}. Note in particular, that this implies $v_k(x)
    \geq \|x\|$ for all $x\in \RR^n$.

    By (IR2) $\overline{M}$ is $K$-irreducible if and only if
    $(I+\overline{M})^{n-1}$ is $K$-positive.  Let $B$ be a compact 
    base of $K$.  As $\overline{M}_k \to \overline{M}$
    there exists by Lemma~\ref{lem:base} a constant $\delta>0$ and an
    index $k_0$, such that for all $k\geq k_0$ and all $x\in B$ we have
    \begin{equation}
        \label{eq:irredB2}
        (I+\overline{M}_k)^{n-1} B \gg_K \delta x\,.
    \end{equation}

    If $\ecc_{\|\cdot\|}(v_k)>C_k\to \infty$, then as $v_k(\cdot) \geq
    \|\cdot\|$ it follows { from the definition of $v_k$} that for all
    $k$ sufficiently large there are $S_k\in {\cal S}({\cal M}_k)$ and
    $x_k\in B$ such that (with $\eta_n$ defined by \eqref{eq:etadef})
    \begin{equation}
     \label{contra2}
        S_k x_k \in \frac{1}{\eta_n\delta }\,B\,,
    \end{equation}
    and so
    \begin{equation}
        \label{contra1}
        \eta_n (I+M_k)^{n-1} S_k x_k \gg_K x_k\,.
    \end{equation}
    As in the final step of the proof of Theorem~\ref{t:nounbounded} the
    combination of \eqref{contra1} and \eqref{contra2} leads to a
    contradiction to the assumption that $\rho({\cal M}_k)=1$. This
    contradiction to the assumption of unbounded eccentricity 
concludes
    the proof.
\end{proof}

Now the proof of Theorem~\ref{t:lipschitzprop} can be completed following
the steps outlined in \cite{Wirt02,wirth2005structure}.

\begin{proof}(of Theorem~\ref{t:lipschitzprop}) { (i)~In the discrete-time
      case,} let ${\cal X}\subset{\cal P}$ be compact, let $C$ be as in
    Proposition~\ref{p:boundedecc}. Pick ${\cal M},{\cal N}\in {\cal X}$
    and an absolute extremal norm $v$ for ${\cal M}$. Recall that by
    definition this implies for the induced matrix norm, also denoted by
    $v$, that $v(A)\leq \rho({\cal M})$ for all $A\in{\cal M}$.  Then for
    any $B\in {\cal N}$, we may choose $A\in {\cal M}$ such that
    $v(A-B)\leq \dist_v(B,{\cal M})$ and we obtain
    \begin{equation*}
        v(B) \leq v(A) + v(B-A) \leq \rho({\cal M}) + H_v({\cal M},{\cal N})\,,
    \end{equation*}
    where $H_v$ is the Hausdorff distance defined using $v$. This yields
    $\rho({\cal N})\leq \rho({\cal M}) + H_v({\cal M},{\cal N})$.  Using
    \eqref{27.06.02b}, we see that $H_v({\cal M},{\cal N})\leq C H({\cal
      M},{\cal N})$ and by symmetry the assertion follows.

    {(ii)~ The continuous time follows as in \cite{Wirt02} by noting that
      the map ${\cal M}\to {\cal S}_t({\cal M})$ defines a Lipschitz
      continuous set-valued map. If ${\cal M}$ consists of exponentially
      $K$-nonnegative matrices, then ${\cal S}_t({\cal M})\subset \pi(K)$
      and the irreducibility property is preserved for almost all $t$. In
      this way the continuous-time case is a direct consequence of the
      discrete-time case.}
\end{proof}

Note that the result of Theorem~\ref{t:lipschitzprop} does not yield the
full force of the statement for single $K$-irreducible matrices. There we
may obtain Lipschitz continuity of the spectral radius on a neighbourhood
which may also include matrices not in $\pi(K)$. So far our result is
restricted to neighbourhoods of $K$-nonnegative matrix sets, but we expect
it can be extended to larger neighbourhoods of irreducible sets of
nonnegative matrices. We note however that $K$-positivity is an open
property. We thus obtain immediately

\begin{corollary}
    \label{c:pos}
Let $K\subset \R^n$ be a proper cone.  

(i) The (discrete-time) joint spectral radius is locally Lipschitz continuous
on the set of compact subsets of 
\begin{equation*}
  \{ A \in \pi(K) \midset \text{ A is $K$-positive } \}\,,
\end{equation*}
endowed with the Hausdorff metric.\\
(ii) The (continuous-time) joint spectral radius is locally Lipschitz
continuous on the set of compact subsets of
\begin{equation*}
  \{ A \in \exppi(K) \midset \text{ A is exponentially $K$-positive } \}\,,
\end{equation*}
 endowed with the Hausdorff metric.
\end{corollary}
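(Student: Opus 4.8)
The plan is to deduce both parts directly from Theorem~\ref{t:lipschitzprop}, by checking that every compact set of matrices appearing in the statement generates a semigroup lying in ${\cal P}_\N$, resp.\ ${\cal P}_{\R_+}$, and that the relevant families of compact matrix sets sit inside these metric spaces (in case (i), as an \emph{open} subspace). The single fact that does the work is that a $K$-positive matrix is $K$-irreducible: if $Ax=\lambda x$ for some $x\in\partial K\setminus\{0\}$, then $\lambda x=Ax\in\intt K$, which forces $\lambda>0$ (since $0\notin\intt K$ and $K$ is pointed), and then $x=\lambda^{-1}Ax\in\intt K$, contradicting $x\in\partial K$; so $A$ has no eigenvector on $\partial K$ and (IR1) applies.

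For (i), a $K$-positive matrix also lies in $\pi(K)$, so a nonempty compact ${\cal M}\subset\{ A \in \pi(K) \midset \text{ A is $K$-positive } \}$ satisfies ${\cal M}\subset\pi(K)$, is compact, and has $\conv {\cal S}_1=\conv{\cal M}$ containing a $K$-irreducible element; hence ${\cal M}$ is $K$-irreducible and ${\cal M}\in{\cal P}_\N$. Since $K$-positivity is an open property (Remark~\ref{rem:irred}) and ${\cal M}$ is compact, there is an $\varepsilon>0$ such that every matrix within distance $\varepsilon$ of ${\cal M}$ is again $K$-positive; hence every compact ${\cal N}$ with $H({\cal M},{\cal N})<\varepsilon$ also lies in ${\cal P}_\N$. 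Thus the family of compact subsets of $\{ A \in \pi(K) \midset \text{ A is $K$-positive } \}$ is an open subspace of $({\cal P}_\N,H)$, and local Lipschitz continuity of $\rho$ on it is inherited from Theorem~\ref{t:lipschitzprop}\,(i).

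For (ii), an exponentially $K$-positive $A$ has $e^{At}\in\pi(K)$ for all $t\geq0$ (so $A\in\exppi(K)$) and $e^{At}$ is $K$-positive, hence $K$-irreducible, for every $t>0$. Given a nonempty compact ${\cal M}$ of such matrices, the constant switching signal $\sigma\equiv A$ with $A\in{\cal M}$ shows $e^{At}\in{\cal S}_t({\cal M})$ for all $t>0$, so $\conv{\cal S}_t({\cal M})$ contains a $K$-irreducible element; therefore ${\cal S}({\cal M},\R_+)$ is $K$-irreducible and ${\cal M}\in{\cal P}_{\R_+}$. Since every compact subset of $\{ A \in \exppi(K) \midset \text{ A is exponentially $K$-positive } \}$ then lies in ${\cal P}_{\R_+}$, that family embeds as a subspace of $({\cal P}_{\R_+},H)$ and the assertion follows by restricting Theorem~\ref{t:lipschitzprop}\,(ii). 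The sole point I expect to need care is that, unlike $K$-positivity, exponential $K$-positivity need not be an open condition (for $K=\R^n_+$ it amounts to $A$ being Metzler and irreducible, which is not open), so in case (ii) the reduction must be phrased as restricting local Lipschitz continuity from ${\cal P}_{\R_+}$ to a subfamily, rather than as passing to an open Hausdorff-neighbourhood as in (i).
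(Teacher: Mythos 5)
Your proposal is correct and follows exactly the route the paper intends: the paper derives the corollary "immediately" from Theorem~\ref{t:lipschitzprop} via the observations that ($K$-)positivity forces $K$-irreducibility and that $K$-positivity is open, and you have merely supplied the routine details (the (IR1) argument, the constant-switching-signal argument in the continuous case, and the restriction of a locally Lipschitz map to a subspace). No gaps.
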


\section{Conclusions}

In this paper we have considered linear inclusions defining positive
systems. We show that under a generalised irreducibility assumption
absolute extremal norms exist. As an application local Lipschitz
continuity of the joint spectral radius on certain positive linear
inclusions is proved. The characterisation of irreducibility in terms of the
data of a semigroup of continuous-time systems with jumps similar to the
results of Propositions~\ref{p:irred} and \ref{lem:irredchar} remains an open
question.


\end{document}